\documentclass[11pt]{amsart}
\usepackage[top=1in, bottom=1in, left=1in, right=1in]{geometry}
\usepackage{times}
\usepackage{amssymb,amsmath,amsthm}
\usepackage[dvipsnames]{xcolor}
\usepackage[colorlinks=true, linkcolor=Cerulean, citecolor=ForestGreen, urlcolor=Plum]{hyperref}   
\numberwithin{equation}{section}

\theoremstyle{plain}
\newtheorem{thm}{Theorem}
\newtheorem{lem}{Lemma}[section]
\newtheorem{prop}[lem]{Proposition}
\newtheorem{cor}[thm]{Corollary}

\newcommand{\Q}{\mathbb{Q}}
\newcommand{\R}{\mathbb{R}}
\newcommand{\calZ}{\mathcal{Z}}
\renewcommand{\epsilon}{\varepsilon}
\renewcommand{\bar}[1]{\overline{#1}}
\renewcommand{\tilde}{\widetilde}
\renewcommand{\le}{\leqslant}

\renewcommand{\ge}{\geqslant}

\DeclareMathOperator{\Gal}{Gal}
\DeclareMathOperator{\Fix}{Fix}
\DeclareMathOperator{\Frob}{Frob}

\begin{document}

\title[Lower bounds for Dedekind zeta moments]
{Sharp lower bounds for shifted moments of Dedekind zeta functions}

\author{Benjamin Durkan}
\address{\raggedright Benjamin Durkan \\
Department of Mathematics\protect\linebreak
The University of Manchester\\
Oxford Road, Manchester, M13 9PL, United Kingdom}
\email{benjamin.durkan@manchester.ac.uk}

\author{Nilmoni Karak}
\address{\raggedright Nilmoni Karak\\
Department of Mathematics\protect\linebreak
Indian Institute of Technology\\
Kharagpur\\
Kharagpur 721302, India}
\email{nilmonikarak@gmail.com}
\email{nilmonimath@kgpian.iitkgp.ac.in}

\author{Kamalakshya Mahatab}
\address{\raggedright Kamalakshya Mahatab\\
Department of Mathematics\protect\linebreak
Indian Institute of Technology\\
Kharagpur\\
Kharagpur 721302, India}
\email{accessing.infinity@gmail.com}
\email{kamalakshya@maths.iitkgp.ac.in}

\subjclass[2020]{Primary 11R42; Secondary 11M06, 11M50}
\keywords{Dedekind zeta functions, shifted moments, number fields,
permutation characters, Generalised Riemann Hypothesis}
\date{}

\begin{abstract}
Let $K_1,\cdots,K_r$ be fixed number fields, and let $L$ be the
compositum of their Galois closures.  Assuming GRH for $\zeta_L$, we prove
a sharp lower bound for products of shifted Dedekind zeta functions on the
critical line, for arbitrary fixed positive real exponents and uniformly
for shifts of size at most $T/2$.  The correlation factor is expressed as
a product of Dedekind zeta functions of the fixed fields of double-coset
stabilisers in $\textrm{Gal}(L/\mathbb{Q})$.  Combined with the corresponding upper
bound by the authors, determines the order of magnitude
of these shifted moments for both Galois and non-Galois fields.
\end{abstract}

\maketitle

\section{Introduction}\label{sec:introduction}

Shifted moments of $L$-functions measure correlations between values at
different ordinates of the critical line.  When the shifts coalesce they
behave like higher ordinary moments, while separated shifts exhibit the
pairwise Euler correlations predicted by Conrey, Farmer, Keating,
Rubinstein, and Snaith \cite{CFKRS}.

For Dedekind zeta functions, the situation is complicated by
non-primitivity.  If $K$ has Galois closure $L$, with
$$
G=\Gal(L/\Q),\qquad H=\Gal(L/K),
$$
then the expected exponent in the $2k$th moment is
$$
\rho_K k^2,\qquad \rho_K=|H\backslash G/H|.
$$
Thus $\rho_K=[K:\Q]$ when $K/\Q$ is Galois, whereas $\rho_K=2$ for a
non-Galois cubic field with Galois closure group $S_3$.  This is consistent
with Heap's conjectures for moments of non-primitive $L$-functions
\cite{HeapDedekind}.

Sharp moment bounds for the Riemann zeta function are now known in a broad
range of shifted and unshifted settings
\cite{Soundararajan,Harper,Chandee,NgShenWong,CurranCorrelation,
RadziwillSoundararajan,HeapSoundararajan,CurranLower}.  For Dedekind zeta
functions, previous work includes special asymptotics, conditional results
for Galois fields, and unconditional cumulative unshifted lower bounds
\cite{Motohashi,MilinovichTurnageButterbaugh,Hagen,AkbaryFodden,Sono,
Krishnamoorthy}.  The purpose of this paper is to obtain the sharp dyadic
lower bound for several Dedekind zeta functions simultaneously, allowing
arbitrary fixed positive real exponents and shifts as large as $T/2$.
Combined with the corresponding upper bound of
\cite{DurkanKarakMahatab}, this determines the order of magnitude of the
shifted moments.

Let $K_1,\ldots,K_r$ be fixed number fields.  Fix embeddings into an
algebraic closure of $\Q$, let $L$ be the compositum of their Galois
closures, and let
$$
G=\Gal(L/\Q),\qquad H_i=\Gal(L/K_i).
$$
Let $\chi_i$ be the permutation character of $G$ acting on $G/H_i$.  For
$1\le i,j\le r$, define
\begin{equation}\label{eq:double-coset-zeta}
\calZ_{ij}(s)=
\prod_{H_i gH_j\in H_i\backslash G/H_j}
\zeta_{L^{H_i\cap gH_jg^{-1}}}(s).
\end{equation}
Conjugate representatives give isomorphic fixed fields, so the product is
well-defined.  For $T\ge3$, real shifts
$\mathbf b=(b_1,\ldots,b_r)$, and fixed exponents
$\mathbf a=(a_1,\ldots,a_r)$ with $a_i>0$, set
\begin{equation}\label{eq:B-definition}
B(T,\mathbf b,\mathbf a)=
\prod_{i,j=1}^r
\left|
\calZ_{ij}\left(1+\frac{1}{\log T}+i(b_i-b_j)\right)
\right|^{a_i a_j/4}.
\end{equation}

Our main result is the following.

\begin{thm}\label{thm:main}
Let $K_1,\ldots,K_r$ be fixed number fields, embedded as above, let $L$ be
the compositum of their Galois closures, and let $a_1,\ldots,a_r>0$ be
fixed.  Assume GRH for $\zeta_L(s)$.  There is
$$
T_0=T_0(K_1,\ldots,K_r,\mathbf a)\ge3
$$
such that, for $T\ge T_0$, uniformly for real shifts satisfying
$|b_i|\le T/2$,
\begin{equation}\label{eq:main-lower-bound}
\int_T^{2T}\prod_{j=1}^r
\left|\zeta_{K_j}\left(\tfrac12+i(t+b_j)\right)\right|^{a_j}\,dt
\gg_{K_1,\ldots,K_r,\mathbf a}
T B(T,\mathbf b,\mathbf a).
\end{equation}
\end{thm}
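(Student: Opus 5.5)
We follow the Heap--Soundararajan lower-bound method (as adapted by Curran to shifted moments), applied to the Euler products of the $\zeta_{K_j}$. Write $\mathbf a$ in terms of a large integer parameter and introduce, for each $j$, a truncated Dirichlet polynomial approximation to $\zeta_{K_j}(\tfrac12+i(t+b_j))^{a_j/2}$. Concretely, split the primes up to $T^{\theta}$ into ranges $I_0=(1,T^{\beta_0}],\ I_\ell=(T^{\beta_{\ell-1}},T^{\beta_\ell}]$ with $\beta_\ell$ growing geometrically, $\sum_\ell\beta_\ell$ small. On each range form
\[
\mathcal P_{\ell,j}(t)=\sum_{p\in I_\ell}\frac{\lambda_{K_j}(p)}{p^{1/2+i(t+b_j)}},
\]
where $\lambda_{K_j}(p)=\chi_j(\Frob_p)$ is the permutation character value at Frobenius (so $\lambda_{K_j}(p)$ is the number of degree-one primes of $K_j$ above $p$). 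Then let $\mathcal N_{\ell,j}(t;\alpha)$ be the truncated Taylor series of $\exp(\alpha\mathcal P_{\ell,j})$ to length about $10/\beta_\ell$, and put $\mathcal N_j(t;\alpha)=\prod_\ell\mathcal N_{\ell,j}(t;\alpha)$.

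By Hölder's inequality, in the form used by Heap--Soundararajan, we bound
\[
\int_T^{2T}\prod_j\zeta_{K_j}(\tfrac12+i(t+b_j))\,\mathcal N_j(t;\tfrac{a_j}{2}-1)\overline{\mathcal N_j(t;\tfrac{a_j}{2})}\,dt
\]
from above. The bound is $\bigl(\int\prod_j|\zeta_{K_j}|^{a_j}\bigr)^{1/p}$ times products of integrals of $|\mathcal N_j|^{\text{powers}}$ (for $a_j<2$ we use the variant with auxiliary exponents; for $a_j\ge2$ the direct Hölder). This reduces the theorem to two estimates:
\begin{enumerate}
\item a lower bound $\gg T B(T,\mathbf b,\mathbf a)$ for the twisted first moment on the left;
\item upper bounds $\ll T B(T,\mathbf b,\mathbf a)$ for the mean values of the products of $|\mathcal N_j|^{c_j}$.
\end{enumerate}

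Both are computed the same way. Expand the short Dirichlet polynomials, apply the mean value theorem for Dirichlet polynomials (the total length is $\le T^{1/2}$), and reduce to Euler products over $p\le T^\theta$. The key arithmetic input is the pairwise correlation
\[
\sum_{p\le x}\frac{\chi_i(\Frob_p)\chi_j(\Frob_p)\cos((b_i-b_j)\log p)}{p}.
\]
By Mackey, $\chi_i\chi_j=\sum_{H_igH_j}\mathrm{Ind}_{H_i\cap gH_jg^{-1}}^G1$, so this sum equals $\log|\calZ_{ij}(1+1/\log x+i(b_i-b_j))|+O(1)$. This uses the Chebotarev/prime ideal theorem for the fields $L^{H_i\cap gH_jg^{-1}}$ (GRH for $\zeta_L$ gives this uniformly in $|b_i-b_j|\le T$). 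Exponentiating with the weights $a_ia_j/4$ produces exactly $B(T,\mathbf b,\mathbf a)$, provided Taylor truncation errors are controlled. Those are handled by the standard argument: with $\mathcal P_{\ell,j}$ large on a rare set, Rankin's trick and the moment bound $\int|\mathcal P_{\ell,j}|^{2m}\ll T\,m!\,(\sum_{p\in I_\ell}\chi_j(\Frob_p)^2/p)^m$ suffice, since $\chi_j$ is bounded by $[K_j:\Q]$.

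For (1) we use an approximate functional equation, or simply a Dirichlet series truncation. Under GRH for $\zeta_L$ (hence for each $\zeta_{K_j}$, which divides a product of Artin $L$-functions of $L$), $\zeta_{K_j}(\tfrac12+it)$ can be replaced on average by $\sum_{n\le T^{\epsilon}}\lambda_{K_j}(n)n^{-1/2-it}$ plus a contribution whose twisted mean against the short polynomials is negligible. We expect it easier to shift the contour to $\Re s=\tfrac12+1/\log T$ and back via Cauchy--Schwarz with a standard second-moment bound for $\zeta_{K_j}$ times short polynomials. The diagonal terms then give the Euler product above.

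\textbf{Main obstacle.} The real difficulty is uniformity in large shifts $|b_i|\le T/2$: off-diagonal terms $m(t+b_i)$ versus $n(t+b_j)$ do not cancel trivially when $b_i-b_j$ is large. There we use that the polynomials have length $T^{o(1)}$ and that $|b_i-b_j|\le T$, so the non-diagonal oscillation $\int_T^{2T}(m/n)^{it}dt$ is bounded as usual. A second, subtler issue is the regime where $|b_i-b_j|\log T$ is small (coalescing shifts), where the correlation acts like a higher moment and $\calZ_{ij}$ near $s=1$ has poles of order equal to the number of double cosets. We handle this uniformly through the $1/\log T$ offset in $B$.
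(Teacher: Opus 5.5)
\paragraph{A different route, with a gap at the H\"older step.}
Your Heap--Soundararajan/Curran route (H\"older against a twisted first moment) differs from the paper's, and as written it breaks at the H\"older step. You claim H\"older bounds the twisted first moment by $\bigl(\int\prod_j|\zeta_{K_j}|^{a_j}\bigr)^{1/p}$ times integrals involving only the Dirichlet polynomials $\mathcal N_j$. With the first moment you chose (each $\zeta_{K_j}$ to the first power), pointwise
\[
\Bigl|\prod_j\zeta_{K_j}\Bigr|=\Bigl(\prod_j|\zeta_{K_j}|^{a_j}\Bigr)^{1/p}\prod_j|\zeta_{K_j}|^{1-a_j/p}.
\]
So the complementary factor is free of zeta functions only when $a_1=\cdots=a_r=p$. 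That is the Radziwi\l\l--Soundararajan situation, which they treat for $p\ge2$. Passing to integer powers $\zeta_{K_j}^{m_j}$ helps only when the $a_j$ are commensurable.

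Even for $r=1$, the range $a_1<2$ is exactly where Heap--Soundararajan need a different input. For $0<k<1$ their inequality is
\[
\Bigl|\int\zeta\,\mathcal N(k-1)\overline{\mathcal N(k)}\Bigr|\le\Bigl(\int|\zeta|^{2k}\Bigr)^{1/2}\Bigl(\int|\zeta\,\mathcal N(k-1)|^{2}\Bigr)^{(1-k)/2}\Bigl(\int|\mathcal N(k-1)|^{2}|\mathcal N(k)|^{2/k}\Bigr)^{k/2}.
\]
The middle factor is a twisted second moment of $\zeta$, supplied by Balasubramanian--Conrey--Heath-Brown. In your setting the missing ingredient is an upper bound of the shape
\[
\int_T^{2T}\prod_j\bigl|\zeta_{K_j}\bigl(\tfrac12+i(t+b_j)\bigr)\bigr|^{e_j}\,|\mathcal D(t)|^2\,dt\ll TB(T,\mathbf b,\mathbf a),
\]
with real exponents $e_j$, $\mathcal D$ built from the $\mathcal N_j$, and uniformity for $|b_i|\le T/2$. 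This is a twisted moment of an object of degree up to $2\sum_j[K_j:\Q]$. Nothing of this kind is known unconditionally, and under GRH it would need its own Soundararajan--Harper argument with twists and shifts. Neither your item (1) nor item (2) supplies it, so the reduction fails for general $\mathbf a$.

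Two smaller points:
\begin{itemize}
\item For item (1), the approximate-functional-equation route is the one that works: with twists of length $T^{o(1)}$ the dual terms have no stationary point. The ``standard second-moment bound for $\zeta_{K_j}$ times short polynomials'' is not standard once $[K_j:\Q]\ge3$.
\item Your Mackey/prime-sum comparison needs no GRH; it follows from Mertens and partial summation.
\end{itemize}

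\paragraph{How the paper avoids this.}
The paper never uses twisted moments. It works with $\tilde F_{\mathbf b}(s)=\prod_i\Phi_{K_i}(s+ib_i)^{q_i}$, which is analytic in $\Re s>1/2$ for arbitrary real $q_i>0$. This is because GRH for $\zeta_L$ together with Aramata--Brauer makes each $\Phi_{K_i}$ zero-free there.

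The only arithmetic input is $H_\lambda(T)\asymp B(T,\mathbf b,\mathbf a)$ up to powers of $2+\lambda$; your Mackey computation is exactly this step. It gives $\mathcal M_\gamma(S_N)\gg\eta^{-A_2}TB$ for the length-$T^{1/2}$ truncation at $\gamma=\tfrac12+\eta/\log T$.

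The argument then runs as follows:
\begin{enumerate}
\item If $\mathcal M_{1/2}(\tilde F_{\mathbf b})\ge TB$ there is nothing to prove, so assume $\mathcal M_{1/2}(\tilde F_{\mathbf b})\le TB$.
\item Gaussian-weighted Gabriel convexity between $1/2$ and $5/4$ then gives $\mathcal M_\gamma(R_N)\ll e^{-\eta/6}TB$, so $\mathcal M_\gamma(\tilde F_{\mathbf b})\gg TB$ for a fixed large $\eta$.
\item A second application between $1/2$ and $3/2$ transfers this lower bound to the critical line.
\end{enumerate}
Unequal and small exponents cost nothing in this argument, and that is precisely where your H\"older approach would need the extra twisted-moment theorem.
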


The upper bound proved in \cite[Theorem~1.1]{DurkanKarakMahatab} is of the
same order under the same hypotheses.  Hence
\begin{equation}\label{eq:two-sided-order}
\int_T^{2T}\prod_{j=1}^r
\left|\zeta_{K_j}\left(\tfrac12+i(t+b_j)\right)\right|^{a_j}\,dt
\asymp_{K_1,\ldots,K_r,\mathbf a}
T B(T,\mathbf b,\mathbf a),
\end{equation}
uniformly for $|b_i|\le T/2$.  When every $K_i=\Q$, this recovers the
positive-exponent case of Curran's shifted lower bound
\cite[Theorem~1.1]{CurranLower}.

The theorem has several immediate unshifted consequences.  Write
$$
\langle\varphi,\psi\rangle_G
=\frac1{|G|}\sum_{\sigma\in G}
\varphi(\sigma)\overline{\psi(\sigma)}.
$$

\begin{cor}\label{cor:unshifted-mixed}
Under the hypotheses of Theorem~\ref{thm:main}, for all sufficiently large
$T$,
\begin{equation}\label{eq:unshifted-mixed}
\int_T^{2T}\prod_{i=1}^r
\left|\zeta_{K_i}\left(\tfrac12+it\right)\right|^{a_i}\,dt
\gg_{K_1,\ldots,K_r,\mathbf a}
T(\log T)^{\frac14
\left\langle\sum_i a_i\chi_i,\sum_i a_i\chi_i\right\rangle_G}.
\end{equation}
Equivalently, the exponent is
$$
\frac14\sum_{i,j=1}^r
a_i a_j |H_i\backslash G/H_j|.
$$
\end{cor}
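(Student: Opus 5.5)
The corollary follows from Theorem~\ref{thm:main} by specialising to $\mathbf b=\mathbf 0$ and evaluating $B(T,\mathbf 0,\mathbf a)$ asymptotically. With all shifts zero, \eqref{eq:B-definition} becomes
$$
B(T,\mathbf 0,\mathbf a)=\prod_{i,j=1}^r\left|\calZ_{ij}\left(1+\tfrac1{\log T}\right)\right|^{a_ia_j/4}.
$$
Each factor of $\calZ_{ij}$ in \eqref{eq:double-coset-zeta} is a Dedekind zeta function of a fixed number field. Such a function has a simple pole at $s=1$ with positive residue and is positive for real $s>1$. Hence $\zeta_{F}(1+1/\log T)\asymp_F \log T$ for each fixed field $F$. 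The product defining $\calZ_{ij}$ has exactly $|H_i\backslash G/H_j|$ factors, so
$$
\calZ_{ij}\left(1+\tfrac1{\log T}\right)\asymp_{K_1,\ldots,K_r}(\log T)^{|H_i\backslash G/H_j|}.
$$
Taking the product over $i,j$ gives
$$
B(T,\mathbf 0,\mathbf a)\asymp(\log T)^{\frac14\sum_{i,j}a_ia_j|H_i\backslash G/H_j|}.
$$
Inserting this into \eqref{eq:main-lower-bound} yields the second form of the exponent.

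It remains to identify this exponent with the character inner product, using Mackey's formula. The permutation characters $\chi_i=\mathrm{Ind}_{H_i}^G 1$ are real-valued, so
$$
\left\langle\sum_i a_i\chi_i,\sum_j a_j\chi_j\right\rangle_G=\sum_{i,j}a_ia_j\langle\chi_i,\chi_j\rangle_G.
$$
For each pair we have $\langle\chi_i,\chi_j\rangle_G=|H_i\backslash G/H_j|$. One way to see this is Frobenius reciprocity: $\langle\chi_i,\chi_j\rangle_G=\langle 1,\mathrm{Res}_{H_i}\chi_j\rangle_{H_i}$, which is the number of $H_i$-orbits on $G/H_j$, and these orbits are exactly the double cosets $H_i gH_j$. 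Equivalently, $\langle\chi_i,\chi_j\rangle_G=\frac1{|G|}\sum_\sigma\chi_i(\sigma)\chi_j(\sigma)$ counts the $G$-orbits on $G/H_i\times G/H_j$ by Burnside's lemma, and these orbits are again in bijection with $H_i\backslash G/H_j$. This proves the equivalence of the two exponents.

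No step here is a real obstacle. The only points needing care are that the implied constants, which come from residues and from the finitely many fixed fields $L^{H_i\cap gH_jg^{-1}}$, depend only on $K_1,\ldots,K_r$, and that $T\ge T_0$ is large enough for $1/\log T$ to lie in the range where $\zeta_F(s)\asymp 1/(s-1)$ holds.
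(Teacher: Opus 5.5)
Your proposal is correct and takes the paper's approach. You set $\mathbf b=\mathbf 0$ in Theorem~\ref{thm:main}. You then use the simple pole with positive residue of each factor of $\calZ_{ij}$ to get $B(T,\mathbf 0,\mathbf a)\asymp(\log T)^{\frac14\sum_{i,j}a_ia_j|H_i\backslash G/H_j|}$, and identify $|H_i\backslash G/H_j|=\langle\chi_i,\chi_j\rangle_G$ via Burnside's lemma on the diagonal action, exactly as the paper does; your Frobenius-reciprocity variant is an equally valid alternative for that last step.
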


\begin{cor}\label{cor:single-field}
Let $K$ be a fixed number field, let $L$ be its Galois closure, and set
$$
G=\Gal(L/\Q),\qquad H=\Gal(L/K),\qquad
\rho_K=|H\backslash G/H|.
$$
Assume GRH for $\zeta_L$.  Then, for every fixed real $k>0$ and all
sufficiently large $T$,
\begin{equation}\label{eq:single-field}
\int_T^{2T}
\left|\zeta_K\left(\tfrac12+it\right)\right|^{2k}\,dt
\gg_{K,k}
T(\log T)^{\rho_Kk^2}.
\end{equation}
If $K$ is a non-Galois cubic field whose Galois closure has group $S_3$,
then $\rho_K=2$.
\end{cor}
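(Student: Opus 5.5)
This corollary follows directly from Theorem~\ref{thm:main} in the special case $r=1$, $K_1=K$, $a_1=2k$, with all shifts equal to zero. The argument has three steps: check the hypotheses, evaluate $B(T,\mathbf 0,\mathbf a)$, and compute $\rho_K$ for the cubic example.

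\textbf{Hypotheses.} When $r=1$, the compositum of Galois closures is just the Galois closure $L$ of $K$, so the hypothesis is GRH for $\zeta_L$, exactly as assumed here. The shift $b_1=0$ satisfies $|b_1|\le T/2$. The exponent $a_1=2k>0$ is fixed. Theorem~\ref{thm:main} therefore gives
$$\int_T^{2T}|\zeta_K(\tfrac12+it)|^{2k}\,dt\gg_{K,k} T\,\bigl|\calZ_{11}(1+1/\log T)\bigr|^{k^2},$$
since $a_1^2/4=k^2$.

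\textbf{Evaluating $B$.} By \eqref{eq:double-coset-zeta}, $\calZ_{11}(s)$ is a product of exactly $\rho_K=|H\backslash G/H|$ Dedekind zeta functions $\zeta_{F}(s)$. Each $F=L^{H\cap gHg^{-1}}$ is a fixed number field. Every Dedekind zeta function has a simple pole at $s=1$ with positive residue $\kappa_F$. Hence for $\sigma=1+1/\log T$ and $T$ large,
$$\zeta_F(\sigma)\ge \tfrac12\kappa_F\log T.$$
Taking the product over the $\rho_K$ factors gives
$$\calZ_{11}(\sigma)\gg_K(\log T)^{\rho_K}.$$
Raising this to the power $k^2$ yields the bound $T(\log T)^{\rho_K k^2}$. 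This is the same computation that would give Corollary~\ref{cor:unshifted-mixed} specialised to $r=1$, using $\langle\chi,\chi\rangle_G=|H\backslash G/H|$.

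\textbf{The cubic case.} Take $G=S_3$ and $H$ a subgroup of order $2$, say $H=\{e,(12)\}$. Then $G/H$ is the natural action of $S_3$ on three points. Since $S_3$ is $2$-transitive, the number of $H$-orbits on $G/H$ is $2$. These orbits are the fixed point and the remaining pair. Equivalently, $\chi=1+\mathrm{std}$ gives $\langle\chi,\chi\rangle=2$. Hence $\rho_K=2$.

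\textbf{Main obstacle.} No step is a genuine obstacle; all the depth lies in Theorem~\ref{thm:main}. The one point to state carefully is the uniform lower bound $\zeta_F(\sigma)\gg(\sigma-1)^{-1}$ as $\sigma\to1^+$, which follows from the Laurent expansion at $s=1$ for each of the finitely many fixed fields $F$.
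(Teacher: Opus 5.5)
Your proposal is correct and follows the paper's route. The paper specialises Corollary~\ref{cor:unshifted-mixed}, which is itself Theorem~\ref{thm:main} at zero shifts combined with $\calZ_{ij}(1+1/\log T)\asymp(\log T)^{|H_i\backslash G/H_j|}$, to $r=1$, $a_1=2k$, and gets $\rho_K=2$ for $S_3$ by the same count of a point stabiliser's orbits on three points; applying Theorem~\ref{thm:main} directly and counting the $\rho_K$ factors of $\calZ_{11}$, as you do, simply inlines that corollary and skips the Burnside identity.
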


Together with \cite[Theorem~1.1]{DurkanKarakMahatab}, this gives
$$
\int_T^{2T}
\left|\zeta_K\left(\tfrac12+it\right)\right|^{2k}\,dt
\asymp_{K,k}
T(\log T)^{\rho_Kk^2}.
$$

\begin{cor}\label{cor:galois}
Let $K/\Q$ be a fixed Galois extension, and assume GRH for $\zeta_K$.
Then, for every fixed real $k>0$ and all sufficiently large $T$,
\begin{equation}\label{eq:galois}
\int_T^{2T}
\left|\zeta_K\left(\tfrac12+it\right)\right|^{2k}\,dt
\gg_{K,k}
T(\log T)^{[K:\Q]k^2}.
\end{equation}
\end{cor}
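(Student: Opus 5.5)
Corollary~\ref{cor:galois} follows from Corollary~\ref{cor:single-field} by specialising to the case where $K$ is its own Galois closure. If $K/\Q$ is Galois, then $L=K$ and $G=\Gal(K/\Q)$. Moreover $H=\Gal(L/K)$ is trivial, and the hypothesis ``GRH for $\zeta_L$'' is exactly the assumed GRH for $\zeta_K$. So Corollary~\ref{cor:single-field} applies directly and gives the lower bound $T(\log T)^{\rho_K k^2}$ for every fixed real $k>0$ and all sufficiently large $T$.

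It remains to identify $\rho_K=|H\backslash G/H|$. With $H=\{1\}$, each double coset $H g H$ is the singleton $\{g\}$, so $\rho_K=|G|=[K:\Q]$. Substituting this into \eqref{eq:single-field} yields \eqref{eq:galois}, and the implied constant depends only on $K$ and $k$.

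Alternatively, one can argue directly from Theorem~\ref{thm:main} with $r=2$, $K_1=K_2=K$, $a_1=a_2=k$ and $b_1=b_2=0$, or with $r=1$, $a_1=2k$ and $b_1=0$. In the $r=1$ case $B(T,\mathbf 0,\mathbf a)=|\calZ_{11}(1+1/\log T)|^{k^2}$, and $\calZ_{11}=\zeta_L^{|G|}$ because every fixed field $L^{H\cap gHg^{-1}}$ equals $L$. Since $\zeta_L(1+1/\log T)\asymp_L \log T$ from the simple pole at $s=1$, this gives $(\log T)^{[K:\Q]k^2}$. The only point that needs checking is this pole asymptotic, which is standard. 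There is no real obstacle here: the corollary is a bookkeeping specialisation of the preceding result.
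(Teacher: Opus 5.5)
Your proposal is correct and follows the paper's proof: specialise Corollary~\ref{cor:single-field} and compute $\rho_K=[K:\Q]$. The paper phrases this through normality of $H$ (double cosets are cosets, so $\rho_K=|G/H|$), whereas you use that $L=K$ forces $H=\{1\}$; this is the same computation.
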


The moment estimates also yield a joint large-deviation consequence.

\begin{cor}
Let $K_1,\ldots,K_r$ be fixed number fields, and let $L$ be the compositum
of their Galois closures over $\Q$.  Let
$$
G=\Gal(L/\Q),\qquad
H_j=\Gal(L/K_j),\qquad
\rho_{K_j}=|H_j\backslash G/H_j|.
$$
Assume GRH for $\zeta_L$, and let
$V_j\asymp\sqrt{\log\log T}$.  Then
\begin{align*}
\frac1T\operatorname{meas}\Bigg\{
t\in[T,2T]:
\log\left|\zeta_{K_j}\left(\frac12+it\right)\right|
\ge
V_j\sqrt{\frac{\rho_{K_j}}2\log\log T},
\ 1\le j\le r
\Bigg\}
\\
\asymp_{K_1,\ldots,K_r}
\exp\left(
-(1+o(1))
\frac{V_1^2+\cdots+V_r^2}{2}
\right).
\end{align*}
\end{cor}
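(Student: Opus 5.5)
The plan is to derive both directions from the two-sided moment estimate \eqref{eq:two-sided-order} in the unshifted case $\mathbf b=\mathbf 0$, applied with exponents $a_j$ that grow slowly with $T$, together with a Chebyshev-type upper bound and a Paley--Zygmund/H\"older-type lower bound. I set $\mathcal L=\log\log T$, $\alpha_j=V_j/\sqrt{\mathcal L}\asymp1$, and write the event as $\log|\zeta_{K_j}|\ge \alpha_j\sqrt{\rho_{K_j}/2}\,\mathcal L$ for $1\le j\le r$. Corollary~\ref{cor:unshifted-mixed} and its upper-bound counterpart give, for fixed $\mathbf a$,
\[
I(\mathbf a):=\frac1T\int_T^{2T}\prod_j|\zeta_{K_j}(\tfrac12+it)|^{a_j}\,dt=(\log T)^{Q(\mathbf a)+o(1)},\qquad Q(\mathbf a)=\tfrac14\sum_{i,j}a_ia_jM_{ij},
\]
where $M_{ij}=|H_i\backslash G/H_j|$ and $M_{jj}=\rho_{K_j}$. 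The first task is to check that the implied constants in Theorem~\ref{thm:main} and in \cite[Theorem~1.1]{DurkanKarakMahatab} are uniform for $\mathbf a$ in a fixed compact subset of $(0,\infty)^r$. I will only use $a_j\asymp1$, and since the range of $\alpha_j$ is compact this uniformity is enough.

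For the upper bound I apply Markov's inequality. The measure of the event is at most $I(\mathbf a)\exp\bigl(-\mathcal L\sum_j a_j\alpha_j\sqrt{\rho_{K_j}/2}\bigr)$, which equals
\[
\exp\Bigl(-\mathcal L\Bigl[\sum_j a_j\alpha_j\sqrt{\rho_{K_j}/2}-Q(\mathbf a)\Bigr]+o(\mathcal L)\Bigr).
\]
I then optimise over $\mathbf a$. The diagonal part of $Q$ alone, namely $\frac14\sum_j\rho_{K_j}a_j^2$, is optimised at $a_j=\alpha_j\sqrt{2/\rho_{K_j}}$, and there the bracket equals $\frac12\sum_j\alpha_j^2$. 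This is exactly the stated rate $\frac12\sum_jV_j^2/\mathcal L$.

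For the lower bound I use the standard tilted-measure argument. With the same exponents $\mathbf a$, I compare $I(\mathbf a)$ against the contributions to it from $t$ where some $\log|\zeta_{K_j}|$ lies far from its tilted mean $\frac12\rho_{K_j}a_j\mathcal L$. Those contributions are bounded via $I(\mathbf a')$ for perturbed exponents $a'_j=a_j\pm\delta$, using Rankin's trick in each coordinate. Strict convexity of $Q$ in each coordinate then shows that these pieces are $(\log T)^{-c\delta^2}I(\mathbf a)$. Consequently a positive proportion of the tilted mass lies in a box where $\log|\zeta_{K_j}|=(\alpha_j\sqrt{\rho_{K_j}/2}+O(\delta))\mathcal L$. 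Undoing the tilt gives a lower bound $\exp(-\mathcal L[\frac12\sum\alpha_j^2+O(\delta)])$, and letting $\delta\to0$ slowly produces the factor $(1+o(1))$.

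The main obstacle is the off-diagonal terms $\frac14\sum_{i\ne j}a_ia_jM_{ij}$ in $Q$. Since every $M_{ij}\ge1$, these terms shift both the Markov optimum and the tilted means. To reach the stated rate $\frac12\sum_jV_j^2$, I have to show that their effect is absorbed into the $o(1)$ in the exponent. My first attempt will be to check this directly in the optimisation: compute the exact Legendre transform of $Q$ and compare it with $\frac12\sum\alpha_j^2$ for the relevant $\alpha_j$. If the off-diagonal terms do not drop out, the argument above still yields the two-sided large-deviation bound, but with the rate given by that Legendre transform.
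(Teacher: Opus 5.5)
The step that fails is the one you defer to your last paragraph, and it cannot be repaired. The off-diagonal terms of $Q$ contribute to the exponent at order $\mathcal L$, so the corollary as stated cannot hold when $r\ge2$. The paper gives no proof of this corollary beyond saying that it follows from the moment estimates, so nothing there avoids the issue.

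The cleanest check is internal to the statement. Take $r=2$ and $K_1=K_2=\Q$, with $V_1=V_2=V$. (If you want distinct fields, take two non-isomorphic arithmetically equivalent fields; they share $\chi$, $\rho$ and $\zeta_K$.) The joint event is then the single event $\log|\zeta(\frac12+it)|\ge V\sqrt{\frac12\log\log T}$. The $r=1$ instance of the corollary gives this measure $\exp(-(1+o(1))V^2/2)$. The $r=2$ instance claims $\exp(-(1+o(1))V^2)$. Since $V^2\asymp\log\log T\to\infty$, these cannot both hold.

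The mechanism is general. Every permutation character contains the trivial character, so $M_{ij}=\langle\chi_i,\chi_j\rangle_G\ge1$. Hence the values $\log|\zeta_{K_j}(\frac12+it)|$ are positively correlated, with covariance $\frac12M_{ij}\mathcal L$.

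In your Markov step, write $y_j=\alpha_j\sqrt{\rho_{K_j}/2}$. For every $\mathbf a\ge0$ the bracket equals $\sum_j(a_jy_j-\frac14\rho_{K_j}a_j^2)-\frac14\sum_{i\ne j}a_ia_jM_{ij}$, which is at most $\frac12\sum_j\alpha_j^2-\frac14\sum_{i\ne j}a_ia_jM_{ij}$. So at your choice of $\mathbf a$ the bracket is strictly below $\frac12\sum_j\alpha_j^2$, not equal to it. Moreover
\[
\Lambda^*:=\sup_{\mathbf a\ge0}\Bigl[\sum_j a_jy_j-Q(\mathbf a)\Bigr]<\frac12\sum_j\alpha_j^2 ,
\]
with a deficit bounded below by a positive constant when the $\alpha_j$ range over a compact subset of $(0,\infty)$. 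For the same reason, the tilted means are $\frac12(M\mathbf a)_j\mathcal L$, not $\frac12\rho_{K_j}a_j\mathcal L$.

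Your tilting argument, run at the maximiser of $\Lambda^*$, then shows the measure is at least $\exp(-(\Lambda^*+o(1))\mathcal L)$. This exceeds the claimed size by a power of $\log T$. For example, with $K_1=\Q$, $K_2=\Q(i)$ and $V_1=V_2=\alpha\sqrt{\mathcal L}$, one finds $\Lambda^*=(2-\sqrt2)\alpha^2$ rather than $\alpha^2$.

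So your fallback, with rate $\Lambda^*\mathcal L$ given by the orthant-constrained Legendre transform of $Q$, is the correct statement. It agrees with $\frac12\sum_jV_j^2$ only when $r=1$. To complete it you still need two things:
\begin{itemize}
\item the two-sided moment bounds uniformly for $\mathbf a$ in compact subsets of $(0,\infty)^r$, since the paper's $T_0$ and constants depend on $\mathbf a$;
\item a small positive exponent in place of $0$ in any coordinate where the maximiser vanishes, because the available moments give no lower-tail control otherwise.
\end{itemize}

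The product rate $\frac12\sum_jV_j^2$ is what your method gives for shifted values $\log|\zeta_{K_j}(\frac12+i(t+b_j))|$ with $|b_i-b_j|\ge1$. There the off-diagonal factors of $B$ contribute only $O(\log\log\log T)$ under GRH. At a common $t$ that rate is not available.
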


\subsection{Overview of the proof}

Set $q_i=a_i/2$ and, in $\Re s>1$, consider
\[
F_{\mathbf b}(s)
=
\prod_{i=1}^r\zeta_{K_i}(s+ib_i)^{q_i}
=
\sum_{n\ge1}\frac{c_{\mathbf b}(n)}{n^s}.
\]
At an unramified prime,
\[
c_{\mathbf b}(p)
=
\frac12\sum_{i=1}^r
a_i\chi_i(\Frob_p)p^{-ib_i}.
\]
The square $|c_{\mathbf b}(p)|^2$ therefore involves products
$\chi_i\chi_j$.  The diagonal $G$-action on
$G/H_i\times G/H_j$ decomposes into orbits indexed by
$H_i\backslash G/H_j$, producing precisely the fixed-field factors
$\calZ_{ij}$ in \eqref{eq:B-definition}.  This identifies the
square-coefficient Dirichlet series with $B(T,\mathbf b,\mathbf a)$ up to
bounded polynomial losses.

A truncation at length $T^{1/2}$ then gives the required weighted
mean-square lower bound slightly to the right of the critical line.  To
handle fractional powers and the shifted poles, we regularise by
\[
\Phi_K(z)=\frac{z-1}{z+1}\zeta_K(z).
\]
Under GRH for $\zeta_L$, the Aramata--Brauer theorem implies that these
functions are analytic and zero-free in $\Re z>1/2$.  Gaussian-weighted
Gabriel convexity is then applied twice: first to control the truncation
error near the critical line, and then to transfer the resulting lower
bound to $\Re s=1/2$.  The Gaussian weight localises the integral to
$[T,2T]$.

\subsection*{Notation}

All implied constants may depend on the fixed fields and on $\mathbf a$,
but not on $T$ or $\mathbf b$.  The finite set of rational primes ramified
in $L$ is denoted by $S_L$.

\subsection{Overview of the proof}
For $q_i=a_i/2$, consider in $\Re s>1$ the Euler-product branch
\begin{equation*}
F_{\mathbf b}(s)=
\prod_{i=1}^r\zeta_{K_i}(s+ib_i)^{q_i}
=\sum_{n\ge1}\frac{c_{\mathbf b}(n)}{n^s}.
\end{equation*}
The first coefficient at an unramified prime is
\begin{equation*}
c_{\mathbf b}(p)=
\frac12\sum_{i=1}^r
a_i\chi_i(\Frob_p)p^{-ib_i}.
\end{equation*}
Proposition~\ref{prop:square-series-B} shows that the Euler product for
$\sum_n|c_{\mathbf b}(n)|^2n^{-1-\lambda/\log T}$ is comparable to
$B(T,\mathbf b,\mathbf a)$, with a loss bounded by a fixed power of
$2+\lambda$.  Rankin's argument then gives a
Dirichlet polynomial of length $T^{1/2}$ whose weighted mean square at
$\Re s=1/2+\eta/\log T$ is
$\gg \eta^{-C}TB(T,\mathbf b,\mathbf a)$.

The fractional-power product $F_{\mathbf b}$ cannot be continued directly
through the shifted poles.  We replace each zeta function by
\begin{equation*}
\Phi_K(z)=\frac{z-1}{z+1}\zeta_K(z).
\end{equation*}
GRH and the Aramata--Brauer theorem show that $\Phi_{K_i}$ is analytic and
zero-free in $\Re z>1/2$.  Hence the corresponding fractional powers are
analytic there.  On $\Re s=5/4$, their difference from the length
$T^{1/2}$ polynomial has weighted mean square at most $T^{7/8}$.  Gabriel
convexity moves this estimate to $1/2+\eta/\log T$, with a factor
$\exp(-c\eta)$.  A fixed large $\eta$ makes the error smaller than the
polynomial mean square.  A second application of Gabriel convexity moves
the resulting lower bound to the critical line.  The Gaussian weight is
supported, up to an exponentially small tail, inside $[T,2T]$.

\subsection*{Notation}
All implied constants may depend on the fixed fields and on
$\mathbf a$, but not on $T$ or $\mathbf b$.  The finite set of
rational primes ramified in $L$ is denoted by $S_L$.

\section{Euler coefficients and double cosets}\label{sec:euler-coefficients}

We first record the analytic consequence of the standing hypothesis.

\begin{lem}\label{lem:regularised-zeta}
Assume GRH for $\zeta_L$.  For every intermediate field $K$ of $L/\Q$,
the function
\begin{equation}\label{eq:Phi-definition}
\Phi_K(z)=\frac{z-1}{z+1}\zeta_K(z)
\end{equation}
is analytic and zero-free in $\Re z>1/2$.
\end{lem}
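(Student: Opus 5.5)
The plan is to use the Aramata--Brauer theorem to write $\zeta_K$ as $\zeta_L$ divided by an entire function, and then read off analyticity and zero-freeness from the corresponding facts for $\zeta_L$. Write $H=\Gal(L/K)$. The quotient $\zeta_K(z)/\zeta(z)$ is the Artin $L$-function attached to $\operatorname{Ind}_H^G 1-1$, and by Aramata--Brauer, $\zeta_L(z)/\zeta_K(z)$ is entire.

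\emph{Analyticity.} $\zeta_K$ is meromorphic on $\mathbb C$ with a single simple pole at $z=1$. The factor $(z-1)$ cancels this pole. The factor $(z+1)^{-1}$ has its only pole at $z=-1$, which lies outside $\Re z>1/2$. Hence $\Phi_K$ is analytic in $\Re z>1/2$.

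\emph{Zero-freeness.} The factor $(z-1)/(z+1)$ has no zeros in $\Re z>1/2$ other than at $z=1$, and there the product is $\operatorname{Res}_{z=1}\zeta_K\cdot\tfrac12\neq0$. So it suffices to show $\zeta_K(z)\ne0$ for $\Re z>1/2$, $z\ne1$. Write
\begin{equation*}
\zeta_L(z)=\zeta_K(z)\,E(z),
\end{equation*}
where $E=\zeta_L/\zeta_K$ is entire by Aramata--Brauer. Suppose $\zeta_K(z_0)=0$ with $\Re z_0>1/2$. Then $z_0\ne1$, since $\zeta_K$ has a pole there, and $\zeta_L(z_0)=\zeta_K(z_0)E(z_0)=0$ because $E$ is holomorphic at $z_0$. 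This contradicts GRH for $\zeta_L$, which asserts that the nontrivial zeros of $\zeta_L$ lie on $\Re z=1/2$. Trivial zeros do not arise, as they lie in $\Re z\le0$.

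The only nontrivial input is the Aramata--Brauer theorem, which I would simply cite. It is the step that needs care: one must check that $E$ is genuinely entire, and not merely a quotient of $L$-functions that could have poles where $\zeta_K$ has zeros. Everything else is routine.
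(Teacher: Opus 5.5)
Your proposal is correct and follows essentially the same route as the paper: you remove the pole at $z=1$ with the factor $(z-1)$, check that the value there is half the nonzero residue, and use Aramata--Brauer ($\zeta_L/\zeta_K$ entire) to turn any zero of $\zeta_K$ in $\Re z>1/2$ into a zero of $\zeta_L$, which GRH excludes. Your extra remarks on the pole at $z=-1$ and on trivial zeros only make explicit details the paper leaves implicit.
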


\begin{proof}
The apparent singularity at $z=1$ is removable, and its value there is
half the positive residue of $\zeta_K$ at $1$.  The Aramata--Brauer theorem
states that if $E/F$ is a finite Galois extension, then
$\zeta_E(s)/\zeta_F(s)$ is entire \cite{Aramata,Brauer}.  Here $L/K$ is
Galois, so $\zeta_L(s)/\zeta_K(s)$ is entire.  A zero of
$\zeta_K$ in $\Re z>1/2$ would therefore be a zero of $\zeta_L$ there.
GRH excludes such a zero.  The factor $(z-1)/(z+1)$ has no remaining zero
or pole in this half-plane.  Thus $\Phi_K$ is analytic and zero-free there.
Since the half-plane is simply connected, $\Phi_K$ also admits an analytic
logarithm there.  This proves the lemma.
\end{proof}

Let $q_i=a_i/2$.  In $\Re s>1$, define
\begin{equation}\label{eq:F-dirichlet-series}
F_{\mathbf b}(s)
=\prod_{i=1}^r\zeta_{K_i}(s+ib_i)^{q_i}
=\sum_{n=1}^{\infty}\frac{c_{\mathbf b}(n)}{n^s},
\end{equation}
where the powers are defined using the logarithms furnished by the
absolutely convergent Euler products.  The coefficients
$c_{\mathbf b}(n)$ are multiplicative in $n$.

\begin{lem}\label{lem:coefficient-bounds}
For every $\epsilon>0$,
\begin{equation*}
|c_{\mathbf b}(n)|\ll_{\epsilon}n^{\epsilon},
\end{equation*}
uniformly for real $\mathbf b$.  If $p\notin S_L$, then
\begin{equation}\label{eq:first-prime-coefficient}
c_{\mathbf b}(p)=
\sum_{i=1}^r q_i\chi_i(\Frob_p)p^{-ib_i}.
\end{equation}
Moreover, for $\sigma>1$,
\begin{equation}\label{eq:square-euler-series}
\log\sum_{n=1}^{\infty}
\frac{|c_{\mathbf b}(n)|^2}{n^\sigma}
=\sum_p\frac{|c_{\mathbf b}(p)|^2}{p^\sigma}+O(1),
\end{equation}
where the error is uniform for $\sigma>1$ and for real
$\mathbf b$.
\end{lem}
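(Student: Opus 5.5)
The plan is to work prime by prime with the Euler product of each $\zeta_{K_i}$, using the factorisation of $p$ in $K_i$. For any prime $p$ we write
$$\zeta_{K_i}(s)=\prod_p\prod_{\mathfrak p\mid p}(1-N\mathfrak p^{-s})^{-1},$$
so that, in $\Re s>1$,
$$\log F_{\mathbf b}(s)=\sum_p\sum_{m\ge1}\frac{\beta_{\mathbf b}(p^m)}{p^{ms}},\qquad \beta_{\mathbf b}(p^m)=\sum_i q_i p^{-imb_i}\frac1m\sum_{\mathfrak p\mid p,\ f(\mathfrak p)\mid m} f(\mathfrak p).$$
Since $\sum_{\mathfrak p\mid p}f(\mathfrak p)\le[K_i:\Q]$, we get $|\beta_{\mathbf b}(p^m)|\le Q/m$ with $Q=\sum_i q_i[K_i:\Q]$, independently of $\mathbf b$. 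Exponentiating the local factor shows that $|c_{\mathbf b}(p^m)|$ is at most the coefficient of $x^m$ in $\exp(\sum_m Qx^m/m)=(1-x)^{-Q}$, i.e.\ $|c_{\mathbf b}(p^m)|\le d_Q(p^m)$, where $d_Q$ is the generalised divisor function. By multiplicativity $|c_{\mathbf b}(n)|\le d_Q(n)\ll_\epsilon n^\epsilon$, uniformly in $\mathbf b$. For $m=1$ and $p\notin S_L$, only the primes of residue degree $1$ contribute, and their number is the number of fixed points of $\Frob_p$ on $G/H_i$, i.e.\ $\chi_i(\Frob_p)$. This gives \eqref{eq:first-prime-coefficient}. (At ramified $p$ the same count gives $|c_{\mathbf b}(p)|\le Q$.)

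For \eqref{eq:square-euler-series}, multiplicativity gives
$$\sum_n|c_{\mathbf b}(n)|^2n^{-\sigma}=\prod_p\Bigl(1+\frac{|c_{\mathbf b}(p)|^2}{p^\sigma}+\sum_{m\ge2}\frac{|c_{\mathbf b}(p^m)|^2}{p^{m\sigma}}\Bigr).$$
Write $x_p=|c_{\mathbf b}(p)|^2p^{-\sigma}\le Q^2/p$ and $y_p=\sum_{m\ge2}|c_{\mathbf b}(p^m)|^2p^{-m\sigma}\le\sum_{m\ge2}d_Q(p^m)^2p^{-m}$. Since $d_Q(p^m)\ll (m+1)^{Q}$, we have $y_p\ll_Q p^{-2}$ for all $p$, uniformly in $\sigma\ge1$ and $\mathbf b$. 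Then
$$\log(1+x_p+y_p)=x_p+O(x_p^2+y_p)=x_p+O_Q(p^{-2}).$$
For the finitely many small $p$ where $x_p$ is not small we use instead the crude bounds $0\le\log(1+x_p+y_p)\le C_Q$ and $0\le x_p\le Q^2$; these cost $O(1)$ in total. Summing over $p$ gives \eqref{eq:square-euler-series} with an error bounded by $\sum_p p^{-2}$ plus a constant depending only on the fields and $\mathbf a$.

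The only point needing care is uniformity: all bounds must depend on $|c_{\mathbf b}(p^m)|$ only through the majorant $d_Q$, never on $\mathbf b$ or $\sigma$. The majorant argument secures this, since every $\mathbf b$-dependence enters through unimodular factors $p^{-imb_i}$. I expect no genuine obstacle beyond this bookkeeping, together with checking the degree-counting identity $\#\{\mathfrak p\mid p: f(\mathfrak p)=1\}=\chi_i(\Frob_p)$ for unramified $p$, which is the standard correspondence between primes above $p$ and $\langle\Frob_p\rangle$-orbits on $G/H_i$.
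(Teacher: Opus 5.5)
Your proposal is correct and takes essentially the same route as the paper. Both arguments use a polynomial-in-$m$ bound on the local coefficients that is uniform in $\mathbf b$; you make it explicit through the majorant $(1-x)^{-Q}$, which gives $|c_{\mathbf b}(n)|\le d_Q(n)$. Both identify $c_{\mathbf b}(p)$ by counting residue-degree-one primes as fixed points of $\Frob_p$ on $G/H_i$. Finally, both expand the logarithm of each square-series Euler factor to first order for large $p$ and bound the finitely many small-prime factors crudely.
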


\begin{proof}
At a rational prime $p$, every Euler factor of a shifted Dedekind zeta
function has the form
\begin{equation*}
\prod_{\mathfrak p\mid p}
(1-p^{-f_{\mathfrak p}(s+ib_i)})^{-q_i},
\end{equation*}
where $f_{\mathfrak p}$ denotes the residue degree of $\mathfrak p$ over
$p$, so that $N\mathfrak p=p^{f_{\mathfrak p}}$.  Since
$|p^{-ib_i}|=1$, its coefficient of $p^{-ms}$ is bounded by
$O((m+1)^A)$, uniformly in $\mathbf b$, for a fixed $A$.  The same bound
holds after the finitely many fields are multiplied.  The usual divisor
estimate gives the first
assertion.  The coefficient of $p^{-s}$ is obtained only from residue
degree one primes.  Equation \eqref{eq:first-prime-coefficient} follows from 
$$\frac{\Lambda_{K_i}}{\log p}=\chi_i(\Frob_p).$$

The Euler factor of the series on the left of
\eqref{eq:square-euler-series} is
\begin{equation*}
1+\frac{|c_{\mathbf b}(p)|^2}{p^\sigma}
+O\left(\frac{1}{p^{2\sigma}}
\sum_{m\ge2}\frac{(m+1)^{2A}}{p^{m-2}}
\right).
\end{equation*}
The sum in parentheses is bounded uniformly in $p$.  For all sufficiently
large $p$, the logarithm may therefore be expanded to first order with an
$O(p^{-2\sigma})$ remainder.  The finitely many smaller Euler factors have
logarithms bounded uniformly for $\sigma>1$ and real $\mathbf b$, because
their coefficients satisfy the same polynomial bound in $m$.  Summing over
the primes proves \eqref{eq:square-euler-series}.
\end{proof}

The next proposition is the Frobenius comparison needed for the lower-bound
argument; it is parallel to \cite[Section~2.3]{DurkanKarakMahatab}.

\begin{prop}\label{prop:square-series-B}
There is $A_0\ge1$ such that, uniformly for
\begin{equation*}
1\le\lambda\le\tfrac12\log T
\qquad\hbox{and}\qquad |b_i|\le T/2,
\end{equation*}
one has
\begin{equation}\label{eq:H-lambda-comparison}
(2+\lambda)^{-A_0}B(T,\mathbf b,\mathbf a)
\ll H_\lambda(T)
\ll(2+\lambda)^{A_0}B(T,\mathbf b,\mathbf a),
\end{equation}
where
\begin{equation}\label{eq:H-lambda-definition}
H_\lambda(T)=\sum_{n=1}^{\infty}
\frac{|c_{\mathbf b}(n)|^2}{n^{1+\lambda/\log T}}.
\end{equation}
Also, for some $A_1\ge1$,
\begin{equation}\label{eq:B-poly}
(\log T)^{-A_1}\ll B(T,\mathbf b,\mathbf a)
\ll(\log T)^{A_1}.
\end{equation}
\end{prop}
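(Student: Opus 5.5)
By Lemma~\ref{lem:coefficient-bounds} with $\sigma_\lambda=1+\lambda/\log T$,
\[
\log H_\lambda(T)=\sum_p |c_{\mathbf b}(p)|^2p^{-\sigma_\lambda}+O(1).
\]
So the task is to compare this prime sum with $\log B(T,\mathbf b,\mathbf a)$. We expand the square using \eqref{eq:first-prime-coefficient}. Since permutation characters are real, for $p\notin S_L$,
\[
|c_{\mathbf b}(p)|^2=\sum_{i,j}q_iq_j\,\chi_i\chi_j(\Frob_p)\,p^{-i(b_i-b_j)}.
\]
The primes in $S_L$ contribute $O(1)$ by the coefficient bound.

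The group-theoretic input is the following. The product $\chi_i\chi_j$ is the permutation character of $G$ acting diagonally on $G/H_i\times G/H_j$. The orbits of this action are indexed by $H_i\backslash G/H_j$. The stabiliser of $(H_i,gH_j)$ is $H_i\cap gH_jg^{-1}$, so
\[
\chi_i\chi_j=\sum_{H_igH_j}\mathrm{Ind}_{H_i\cap gH_jg^{-1}}^{G}\mathbf 1 .
\]
For each fixed field $M=L^{H_i\cap gH_jg^{-1}}$ and unramified $p$, the induced character evaluated at $\Frob_p$ equals the number of degree-one primes of $M$ above $p$. Hence, for $\Re s>1$,
\[
\log\zeta_M(s)=\sum_{p\notin S_L}\mathrm{Ind}\mathbf 1(\Frob_p)\,p^{-s}+O(1),
\]
because higher prime powers and higher residue degrees contribute a uniformly convergent $O(1)$. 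Summing over double cosets gives
\[
\sum_p \chi_i\chi_j(\Frob_p)p^{-s}=\log\calZ_{ij}(s)+O(1)
\]
uniformly in $\Re s>1$. Taking real parts, with $s=\sigma_\lambda+i(b_i-b_j)$ and weights $q_iq_j=a_ia_j/4$, we obtain
\[
\log H_\lambda(T)=\sum_{i,j}\frac{a_ia_j}{4}\log\bigl|\calZ_{ij}(\sigma_\lambda+i(b_i-b_j))\bigr|+O(1).
\]

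It remains to replace $\sigma_\lambda$ by $\sigma_1=1+1/\log T$. Each $\log\zeta_M$ equals $\sum_{\mathfrak p}N\mathfrak p^{-s}+O(1)$. Its real part therefore differs between $\sigma_1$ and $\sigma_\lambda$ by
\[
\sum_{\mathfrak p}N\mathfrak p^{-\sigma_1}\bigl(1-N\mathfrak p^{-(\lambda-1)/\log T}\bigr)\cos(\cdot)
\]
up to $O(1)$, and this is bounded in absolute value by
\[
\sum_{\mathfrak p}\frac{1}{N\mathfrak p^{\sigma_1}}\min\Bigl(1,\frac{\lambda\log N\mathfrak p}{\log T}\Bigr).
\]
By the prime ideal theorem for $M$ (Mertens-type estimate, fixed field), this is $\log(2+\lambda)+O(1)$. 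Summing over the boundedly many pairs and double cosets gives both inequalities in \eqref{eq:H-lambda-comparison} with $A_0=\sum_{i,j}\frac{a_ia_j}{4}|H_i\backslash G/H_j|+1$. The restriction $\lambda\le\frac12\log T$ is not essential here beyond keeping $\sigma_\lambda\le 3/2$.

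For \eqref{eq:B-poly}, set $s=\sigma_1+it$. Then trivially $|\zeta_M(s)|\le\zeta_M(\sigma_1)\ll\log T$. For the lower bound, use $|\zeta_M(s)|\ge\zeta_M(2\sigma_1)/\zeta_M(\sigma_1)\gg(\log T)^{-1}$, which follows from the Euler product inequality $|1-x|\le 1+|x|$. Since the exponents $a_ia_j/4$ and the number of factors are fixed, this gives $(\log T)^{\pm A_1}$ uniformly in $\mathbf b$. The hypothesis $|b_i|\le T/2$ is not even needed at this stage.

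The main obstacle is the uniform $O(1)$ identification of $\sum_p\chi_i\chi_j(\Frob_p)p^{-s}$ with $\log\calZ_{ij}(s)$, i.e.\ the Mackey-type orbit decomposition together with the degree-one-prime count. This needs care at primes where residue degrees exceed one and at ramified primes. I would handle both by observing that the discrepancies are supported on $p^2$-type terms or on the finite set $S_L$, and are absolutely bounded for $\Re s>1$.
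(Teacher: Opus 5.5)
Your proposal is correct and uses the same ingredients as the paper: the square-Euler-series reduction from Lemma~\ref{lem:coefficient-bounds}, the orbit (Mackey) decomposition of $\chi_i\chi_j$ together with the Frobenius fixed-point count, a Mertens-type smoothing bound that produces the $O(\log(2+\lambda))$ loss, and the Euler-product bounds $\zeta_F(2\sigma)/\zeta_F(\sigma)\le|\zeta_F(\sigma+iu)|\le\zeta_F(\sigma)$ for \eqref{eq:B-poly}. The only difference is bookkeeping. You evaluate $\calZ_{ij}$ directly at $1+\lambda/\log T$ and then shift to $1+1/\log T$ using Mertens over the prime ideals of each fixed field. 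The paper instead passes through the truncated sum $\sum_{p\le T}|c_{\mathbf b}(p)|^2/p$, so it only needs Mertens over rational primes with bounded coefficients.
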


\begin{proof}
For $p\notin S_L$, equation~\eqref{eq:first-prime-coefficient} gives
\begin{equation}\label{eq:prime-square-expansion}
|c_{\mathbf b}(p)|^2
=\frac14\sum_{i,j=1}^r
a_i a_j\chi_i(\Frob_p)\chi_j(\Frob_p)
p^{-i(b_i-b_j)}.
\end{equation}
The diagonal action of $G$ on $G/H_i\times G/H_j$ has character
$\chi_i\chi_j$.  Its orbits are indexed by $H_i\backslash G/H_j$, and
the stabiliser of $(H_i,gH_j)$ is $H_i\cap gH_jg^{-1}$.  Decomposing this
$G$-set into its orbits gives, for every $\sigma\in G$,
\begin{equation}\label{eq:mackey-character-identity}
\begin{split}
\chi_i(\sigma)\chi_j(\sigma)
&=\sum_{H_i gH_j\in H_i\backslash G/H_j}
\Bigl|\Fix\Bigl(\sigma:G/(H_i\cap gH_jg^{-1})\\
&\hspace{9em}\longrightarrow
G/(H_i\cap gH_jg^{-1})\Bigr)\Bigr|.
\end{split}
\end{equation}
For $\Re s>1$, take the logarithms defined by the absolutely convergent
Euler products.  At an unramified prime $p$, the coefficient of $p^{-s}$
in the logarithm of the factor attached to
$L^{H_i\cap gH_jg^{-1}}$ is the number of fixed points of $\Frob_p$ on
$G/(H_i\cap gH_jg^{-1})$.  It follows from
\eqref{eq:mackey-character-identity} that
\begin{equation}\label{eq:double-coset-log-euler}
\log\calZ_{ij}(s)
=\sum_{p\notin S_L}
\frac{\chi_i(\Frob_p)\chi_j(\Frob_p)}{p^s}+O(1),
\end{equation}
where the $O(1)$ term is uniform for $\Re s>1$.  Indeed, the ramified
primes form a fixed
finite set, and the terms of prime-power exponent at least two are bounded
in absolute value by a fixed multiple of
$\sum_p\sum_{m\ge2}p^{-m\Re s}$.

Set $s=1+1/\log x+iu$.  Since the character values are bounded, Mertens'
estimate and partial summation give
\begin{equation*}
\sum_{p\le x}\frac{1-p^{-1/\log x}}p
+\sum_{p>x}\frac1{p^{1+1/\log x}}\ll1.
\end{equation*}
Comparing \eqref{eq:double-coset-log-euler} with the truncated prime sum
and taking real parts therefore gives, uniformly for real $u$ and $x\ge3$,
\begin{equation}\label{eq:double-coset-prime-sum}
\sum_{\substack{p\le x\\p\notin S_L}}
\frac{\chi_i(\Frob_p)\chi_j(\Frob_p)
\cos(u\log p)}{p}
=\log\left|\calZ_{ij}
\left(1+\frac{1}{\log x}+iu\right)\right|+O(1).
\end{equation}
The terms indexed by $(i,j)$ and $(j,i)$ in
\eqref{eq:prime-square-expansion} are complex conjugates, so their
imaginary parts cancel when the ordered pairs are summed.
Equations~\eqref{eq:prime-square-expansion} and
\eqref{eq:double-coset-prime-sum}, summed over the ordered pairs $(i,j)$,
give
\begin{equation}\label{eq:prime-sum-B}
\sum_{p\le T}\frac{|c_{\mathbf b}(p)|^2}{p}
=\log B(T,\mathbf b,\mathbf a)+O(1).
\end{equation}

Let $\delta=\lambda/\log T$.  Since
$|c_{\mathbf b}(p)|\ll1$, Mertens' estimate and partial summation give
\begin{align}
&\sum_{p\le T}\frac{1-p^{-\delta}}p
+\sum_{p>T}\frac1{p^{1+\delta}}
\ll \log(2+\lambda).                                    \label{eq:smoothing-cost}
\end{align}
For example, the first sum is bounded by a constant multiple of
\begin{equation*}
1+\int_{1/\log T}^{1}\frac{1-e^{-\lambda v}}v\,dv,
\end{equation*}
and the second by a constant multiple of
$1+\int_1^\infty e^{-\lambda v}v^{-1}\,dv$.
Combining \eqref{eq:square-euler-series}, \eqref{eq:prime-sum-B}, and
\eqref{eq:smoothing-cost} yields
\begin{equation*}
\log H_\lambda(T)
=\log B(T,\mathbf b,\mathbf a)
+O(\log(2+\lambda)).
\end{equation*}
This proves \eqref{eq:H-lambda-comparison}.

Every Dedekind zeta function in \eqref{eq:B-definition}, evaluated at
$1+1/\log T+iu$, has modulus at most its value at
$1+1/\log T$, which is $O(\log T)$.  If $F$ is one of the fixed fields
occurring in that definition and $\sigma=1+1/\log T$, then its Euler
product also gives
\begin{equation*}
|\zeta_F(\sigma+iu)|^{-1}
\le \prod_{\mathfrak p}\left(1+(N\mathfrak p)^{-\sigma}\right)
=\frac{\zeta_F(\sigma)}{\zeta_F(2\sigma)}
\ll\log T.
\end{equation*}
There are only finitely many factors.  This proves \eqref{eq:B-poly}.
\end{proof}

\section{A short polynomial}\label{sec:short-polynomial}

The parameter $\eta$ measures the displacement from the critical line.  The
next lemma records the required polynomial dependence on $\eta$.

\begin{lem}\label{lem:coefficient-truncation}
There exist constants $A_2\ge1$ and $\eta_0\ge1$, depending only on
the fixed fields and on $\mathbf a$, such that the following
holds.  Let $\eta\ge\eta_0$ be fixed, and suppose that $T$ is sufficiently
large in terms of $\eta$.  Set
\begin{equation}\label{eq:N-gamma}
N=T^{1/2},\qquad \gamma=\frac12+\frac{\eta}{\log T}.
\end{equation}
Then, uniformly for $|b_i|\le T/2$,
\begin{align}
\sum_{n\le N}\frac{|c_{\mathbf b}(n)|^2}{n^{2\gamma}}
&\gg \eta^{-A_2}B(T,\mathbf b,\mathbf a), \label{eq:truncated-lower}\\
\sum_{n\le N}\frac{|c_{\mathbf b}(n)|^2}{n}
&\ll B(T,\mathbf b,\mathbf a).                 \label{eq:truncated-upper}
\end{align}
\end{lem}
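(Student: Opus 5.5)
Both bounds will follow from Proposition~\ref{prop:square-series-B} together with Rankin's trick.

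\emph{Upper bound~\eqref{eq:truncated-upper}.} For $n\le N=T^{1/2}$ we have $n^{-1}\le e\,n^{-1-1/\log T}$, since $n^{1/\log T}\le e$. Dropping the restriction $n\le N$ then gives
\[
\sum_{n\le N}\frac{|c_{\mathbf b}(n)|^2}{n}\le e\,H_1(T)\ll B(T,\mathbf b,\mathbf a),
\]
by \eqref{eq:H-lambda-comparison} with $\lambda=1$.

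\emph{Lower bound~\eqref{eq:truncated-lower}.} Note that $n^{-2\gamma}=n^{-1-2\eta/\log T}$. We write the full sum as the truncated sum plus a tail:
\[
H_{2\eta}(T)=\sum_{n\le N}\frac{|c_{\mathbf b}(n)|^2}{n^{2\gamma}}+\sum_{n>N}\frac{|c_{\mathbf b}(n)|^2}{n^{1+2\eta/\log T}}.
\]
This requires $2\eta\le\frac12\log T$, which holds once $T$ is large in terms of $\eta$. Proposition~\ref{prop:square-series-B} then gives $H_{2\eta}(T)\gg(2+2\eta)^{-A_0}B$.

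For the tail we use Rankin's trick. For $n>N$ we have $1\le(n/N)^{\eta/\log T}$. Inserting this factor into the tail bounds it by
\[
N^{-\eta/\log T}\,H_{\eta}(T)=e^{-\eta/2}H_\eta(T)\ll e^{-\eta/2}(2+\eta)^{A_0}B.
\]
This uses \eqref{eq:H-lambda-comparison} again, now with $\lambda=\eta$. Hence
\[
\sum_{n\le N}\frac{|c_{\mathbf b}(n)|^2}{n^{2\gamma}}\ge\bigl(c_1(2+2\eta)^{-A_0}-C_2e^{-\eta/2}(2+\eta)^{A_0}\bigr)B,
\]
where $c_1,C_2$ are the implied constants from the proposition. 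These constants depend only on the fields and on $\mathbf a$, and they are uniform in $\mathbf b$ and in $\lambda$.

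The only point needing care is the choice of $\eta_0$, and this is the main obstacle, though a mild one. Because $e^{-\eta/2}$ decays faster than any power of $\eta$, we can choose $\eta_0$ depending only on $A_0,c_1,C_2$ so that for $\eta\ge\eta_0$ the subtracted term is at most half the main term. The right-hand side is then $\gg\eta^{-A_0}B$, so we may take $A_2=A_0$.

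All constants are independent of $\mathbf b$ because Proposition~\ref{prop:square-series-B} is uniform for $|b_i|\le T/2$. The requirement that $T$ be large in terms of $\eta$ is used only to ensure $2\eta\le\frac12\log T$, so that $\lambda=2\eta$ is admissible in the proposition.
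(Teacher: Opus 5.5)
Your proposal is correct and follows essentially the same route as the paper. The lower bound applies Rankin's trick, comparing $H_{2\eta}(T)$ with $e^{-\eta/2}H_\eta(T)$ via Proposition~\ref{prop:square-series-B} and then choosing $\eta_0$ so the tail is at most half the main term. The upper bound compares the truncated sum with $H_1(T)$; your use of $n^{1/\log T}\le e$ in place of the paper's sharper $e^{1/2}$ is harmless.
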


\begin{proof}
For $T$ sufficiently large in terms of $\eta$, both $\eta$ and $2\eta$
lie in the range of Proposition~\ref{prop:square-series-B}.  Hence
\begin{equation*}
H_{2\eta}(T)\gg \eta^{-A_0}B(T,\mathbf b,\mathbf a).
\end{equation*}
For $n>N$, Rankin's inequality gives
\begin{equation*}
n^{-1-2\eta/\log T}
\le N^{-\eta/\log T}n^{-1-\eta/\log T}
=e^{-\eta/2}n^{-1-\eta/\log T}.
\end{equation*}
Consequently,
\begin{equation*}
\sum_{n>N}\frac{|c_{\mathbf b}(n)|^2}
{n^{1+2\eta/\log T}}
\ll e^{-\eta/2}(2+\eta)^{A_0}
B(T,\mathbf b,\mathbf a).
\end{equation*}
Choose $\eta_0$ so that this is at most half the preceding lower bound
whenever $\eta\ge\eta_0$.  This proves \eqref{eq:truncated-lower}, after
increasing $A_2$.

If $n\le N$, then $n^{1/\log T}\le e^{1/2}$.  Hence
\begin{equation*}
\sum_{n\le N}\frac{|c_{\mathbf b}(n)|^2}{n}
\le e^{1/2}H_1(T)\ll B(T,\mathbf b,\mathbf a),
\end{equation*}
which proves \eqref{eq:truncated-upper}.
\end{proof}

Define
\begin{equation}\label{eq:S-definition}
S_N(s)=\sum_{n\le N}\frac{c_{\mathbf b}(n)}{n^s}.
\end{equation}
Let
\begin{equation}\label{eq:gaussian-weight}
J_T=[5T/4,7T/4],\qquad
w_T(t)=\int_{J_T}e^{-(t-\tau)^2}\,d\tau,
\end{equation}
and, for a function $U$ on a vertical line, write
\begin{equation}\label{eq:weighted-norm}
\mathcal M_\sigma(U)=
\int_{-\infty}^{\infty}|U(\sigma+it)|^2w_T(t)\,dt.
\end{equation}

\begin{lem}\label{lem:weighted-mean-value}
For $N\ge1$ and arbitrary complex numbers $d_n$,
\begin{equation}\label{eq:weighted-mean-value}
\int_{-\infty}^{\infty}
\left|\sum_{n\le N}d_n n^{-it}\right|^2w_T(t)\,dt
=\frac{\sqrt\pi T}{2}\sum_{n\le N}|d_n|^2
+O\left(N\log(2N)\sum_{n\le N}|d_n|^2\right).
\end{equation}
The absolute constant in the error does not depend on the $d_n$.
\end{lem}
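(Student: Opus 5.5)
We expand the square and integrate against the weight term by term. Writing the integrand as $\sum_{m,n\le N}d_m\bar{d_n}(n/m)^{it}$, we get
\begin{equation*}
\int_{-\infty}^{\infty}\Bigl|\sum_{n\le N}d_n n^{-it}\Bigr|^2w_T(t)\,dt
=\sum_{m,n\le N}d_m\bar{d_n}\,\hat w_T\bigl(\log(m/n)\bigr),
\qquad
\hat w_T(\xi)=\int_{-\infty}^{\infty}w_T(t)e^{i\xi t}\,dt.
\end{equation*}
Interchanging sum and integral is harmless, since the sum is finite and $w_T$ is integrable. The next step is to compute $\hat w_T$. By Fubini, $w_T=\mathbf 1_{J_T}*g$ with $g(t)=e^{-t^2}$, so
\begin{equation*}
\hat w_T(\xi)=\hat g(\xi)\,\widehat{\mathbf 1_{J_T}}(\xi)
=\sqrt\pi e^{-\xi^2/4}\int_{J_T}e^{i\xi\tau}\,d\tau.
\end{equation*}

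The diagonal and off-diagonal terms are handled separately. For the diagonal $m=n$ we have $\xi=0$, and $\hat w_T(0)=\sqrt\pi\,|J_T|=\sqrt\pi\,T/2$. This is exactly the main term $\frac{\sqrt\pi T}{2}\sum_{n\le N}|d_n|^2$. For the off-diagonal terms, integrating over the interval $J_T$ gives
\begin{equation*}
\Bigl|\int_{J_T}e^{i\xi\tau}\,d\tau\Bigr|\le \frac{2}{|\xi|}.
\end{equation*}
Since also $\sqrt\pi e^{-\xi^2/4}\le\sqrt\pi$, it follows that $|\hat w_T(\log(m/n))|\ll 1/|\log(m/n)|$, uniformly in $T$.

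The off-diagonal sum is then bounded by the classical Montgomery--Vaughan style estimate. Using $|d_m\bar{d_n}|\le\frac12(|d_m|^2+|d_n|^2)$ and symmetry, it is at most a constant times
\begin{equation*}
\sum_{m\le N}|d_m|^2\sum_{\substack{n\le N\\ n\ne m}}\frac{1}{|\log(m/n)|}.
\end{equation*}
For fixed $m$ we write $n=m+k$. When $|k|\le m/2$ we have $|\log(m/n)|\gg|k|/m$, so these terms contribute $\ll m\sum_{1\le|k|\le N}1/|k|\ll N\log(2N)$. When $n<m/2$ or $n>3m/2$ we have $|\log(m/n)|\gg1$, and there are at most $N$ such terms, contributing $\ll N$. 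Altogether the inner sum is $\ll N\log(2N)$, uniformly in $m\le N$. This gives the stated error term $O\bigl(N\log(2N)\sum_{n\le N}|d_n|^2\bigr)$ with an absolute constant, since nothing depended on $T$ or the $d_n$.

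The only point that needs care is the uniform bound $\ll 1/|\xi|$ for $\hat w_T$. Its key feature is that it does not grow with $T$, which the factorisation into the Gaussian transform and the transform of the indicator of $J_T$ makes transparent. After that, the remaining work is the routine harmonic-sum estimate above. The Hilbert-inequality refinement, which would give the error $O(\sum n|d_n|^2)$, is not needed here, because the cruder bound $N\log(2N)\sum|d_n|^2$ is exactly what is claimed.
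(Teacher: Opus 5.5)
Your proof is correct and follows essentially the paper's argument: expand the square, factor the transform as $\sqrt\pi e^{-\xi^2/4}\int_{J_T}e^{i\xi\tau}\,d\tau$, take $\sqrt\pi\,T/2$ from the diagonal, and bound each off-diagonal term by a constant times $1/|\log(m/n)|$ before the symmetric $|d_m|^2+|d_n|^2$ step. The only difference is cosmetic: the paper bounds the inner sum with the one-line inequality $|\log(m/n)|\ge|m-n|/N$ instead of your near/far splitting. (Strictly, with your convention the argument of $\hat w_T$ should be $\log(n/m)$, but this is harmless since only $\hat w_T(0)$ and $|\hat w_T|$ are used.)
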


\begin{proof}
Expanding the square and applying Fubini's theorem, we obtain
\begin{align*}
&\int_{\R}\left|\sum_{n\le N}d_n n^{-it}\right|^2w_T(t)\,dt\\
&\quad=\sqrt\pi\sum_{m,n\le N}d_m\bar{d_n}
\exp\left(-\frac14\log^2\frac mn\right)
\int_{J_T}e^{-i\tau\log(m/n)}\,d\tau.
\end{align*}
When $m=n$, the inner integral is $T/2$, and hence the diagonal
contribution gives the main term in \eqref{eq:weighted-mean-value}.  If
$m\ne n$, then
\begin{equation*}
\left|\int_{J_T}e^{-i\tau\log(m/n)}\,d\tau\right|
\le\frac{2}{|\log(m/n)|}
\le\frac{2N}{|m-n|}.
\end{equation*}
Using $2|d_m d_n|\le|d_m|^2+|d_n|^2$ and
\begin{equation*}
\sum_{\substack{n\le N\\n\ne m}}\frac1{|m-n|}
\ll\log(2N),
\end{equation*}
bounds the off-diagonal terms by the error in
\eqref{eq:weighted-mean-value}.  This proves the lemma.
\end{proof}

Lemmas~\ref{lem:coefficient-truncation} and
\ref{lem:weighted-mean-value} give the following estimates.  Here
$N=T^{1/2}$, so $N\log(2N)=o(T)$; in particular, for all sufficiently
large $T$, the off-diagonal error is at most half of the diagonal term.
\begin{align}
\mathcal M_\gamma(S_N)
&\gg \eta^{-A_2}T B(T,\mathbf b,\mathbf a),
                                                        \label{eq:S-gamma-lower}\\
\mathcal M_{1/2}(S_N)
&\ll T B(T,\mathbf b,\mathbf a).                \label{eq:S-half-upper}
\end{align}

\section{Convexity in a strip}\label{sec:convexity}

We use the following form of Gabriel's theorem.  If $V$ is analytic in the
strip $\alpha\le\Re s\le\beta$, tends to zero uniformly as
$|\Im s|\to\infty$, and
\begin{equation*}
I_V(\sigma)=\int_{-\infty}^{\infty}|V(\sigma+it)|^p\,dt
\end{equation*}
is finite on the boundary, then, for $p>0$ and
$\alpha\le\sigma\le\beta$,
\begin{equation}\label{eq:gabriel-original}
I_V(\sigma)
\le I_V(\alpha)^{(\beta-\sigma)/(\beta-\alpha)}
I_V(\beta)^{(\sigma-\alpha)/(\beta-\alpha)}.
\end{equation}
The inequality is due to Gabriel \cite{Gabriel}.  We use Proposition 3.3 of
Arguin, Ouimet, and Radziwi\l\l{} \cite{ArguinOuimetRadziwill}, which records
the required strip formulation.

\begin{lem}\label{lem:weighted-gabriel}
Let $U$ be analytic in a neighbourhood of the closed strip
$\alpha\le\Re s\le\beta$, and suppose that, for some $C\ge0$,
\begin{equation*}
|U(\sigma+it)|\ll(1+|t|)^C
\end{equation*}
uniformly for $\alpha\le\sigma\le\beta$.  Suppose its boundary values have
finite weighted norms.  If
$\alpha\le\sigma\le\beta$ and
\begin{equation*}
\vartheta=\frac{\sigma-\alpha}{\beta-\alpha},
\end{equation*}
then
\begin{equation}\label{eq:weighted-gabriel}
\mathcal M_\sigma(U)
\le \exp\!\left(\vartheta(1-\vartheta)(\beta-\alpha)^2\right)
\mathcal M_\alpha(U)^{1-\vartheta}
\mathcal M_\beta(U)^{\vartheta}.
\end{equation}
More generally, suppose that $U$ is analytic in the open strip, has
polynomial growth uniformly on its closed substrips, and that
\begin{equation*}
\mathcal M_{\alpha+\delta}(U)\longrightarrow\mathcal M_\alpha(U),
\qquad
\mathcal M_{\beta-\delta}(U)\longrightarrow\mathcal M_\beta(U)
\end{equation*}
as $\delta\downarrow0$, with finite limits.  Then the same conclusion
holds with these one-sided endpoint norms.
\end{lem}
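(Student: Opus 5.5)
The plan is to reduce the weighted inequality to Gabriel's inequality \eqref{eq:gabriel-original} with $p=2$, applied to a Gaussian-twisted version of $U$, and then to integrate over the centre of the Gaussian. Fix $\tau\in J_T$ and set
\begin{equation*}
V_\tau(s)=U(s)\exp\!\left(\tfrac12(s-i\tau)^2\right).
\end{equation*}
For $s=\sigma+it$ we have $\Re(s-i\tau)^2=\sigma^2-(t-\tau)^2$. Hence
\begin{equation*}
|V_\tau(\sigma+it)|^2=e^{\sigma^2}|U(\sigma+it)|^2e^{-(t-\tau)^2}.
\end{equation*}
Since $|U|\ll(1+|t|)^C$ uniformly in the strip, $V_\tau$ is analytic in the closed strip and tends to zero uniformly as $|\Im s|\to\infty$. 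Its boundary integrals $I_{V_\tau}(\alpha)$ and $I_{V_\tau}(\beta)$ are finite. So \eqref{eq:gabriel-original} applies and gives
\begin{equation*}
e^{\sigma^2}A_\tau(\sigma)\le\bigl(e^{\alpha^2}A_\tau(\alpha)\bigr)^{1-\vartheta}\bigl(e^{\beta^2}A_\tau(\beta)\bigr)^{\vartheta},
\qquad
A_\tau(x)=\int_{\R}|U(x+it)|^2e^{-(t-\tau)^2}\,dt.
\end{equation*}

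Next I will verify the exponent identity. Write $\sigma=(1-\vartheta)\alpha+\vartheta\beta$. Then
\begin{equation*}
(1-\vartheta)\alpha^2+\vartheta\beta^2-\sigma^2=\vartheta(1-\vartheta)(\beta-\alpha)^2.
\end{equation*}
This gives, for each $\tau$,
\begin{equation*}
A_\tau(\sigma)\le e^{\vartheta(1-\vartheta)(\beta-\alpha)^2}A_\tau(\alpha)^{1-\vartheta}A_\tau(\beta)^{\vartheta}.
\end{equation*}
Now integrate over $\tau\in J_T$. Apply Hölder's inequality with exponents $1/(1-\vartheta)$ and $1/\vartheta$ (trivial if $\vartheta\in\{0,1\}$):
\begin{equation*}
\int_{J_T}A_\tau(\alpha)^{1-\vartheta}A_\tau(\beta)^{\vartheta}\,d\tau\le\Bigl(\int_{J_T}A_\tau(\alpha)\,d\tau\Bigr)^{1-\vartheta}\Bigl(\int_{J_T}A_\tau(\beta)\,d\tau\Bigr)^{\vartheta}.
\end{equation*}
By Tonelli's theorem, $\int_{J_T}A_\tau(x)\,d\tau=\mathcal M_x(U)$, by the definition \eqref{eq:gaussian-weight} of $w_T$. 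This yields \eqref{eq:weighted-gabriel}.

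For the general statement, I will apply the case just proved on the closed substrip $\alpha+\delta\le\Re s\le\beta-\delta$ for small $\delta>0$. Polynomial growth holds uniformly there by hypothesis, and $U$ is analytic in a neighbourhood of the substrip. The corresponding parameter is $\vartheta_\delta=(\sigma-\alpha-\delta)/(\beta-\alpha-2\delta)$, and $\vartheta_\delta\to\vartheta$ as $\delta\downarrow0$. Letting $\delta\downarrow0$ and using the assumed convergence of the endpoint norms to finite limits gives the conclusion; the prefactor is continuous in $\delta$. The case $\sigma=\alpha$ or $\sigma=\beta$ is trivial.

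The only delicate point is checking the hypotheses of Gabriel's theorem for $V_\tau$: uniform decay and finiteness of the boundary integrals. This is exactly where the polynomial growth assumption is used, since the Gaussian factor $e^{-(t-\tau)^2/2}$ beats any polynomial in $|t|$. A minor measurability check is also needed for Tonelli's theorem, but $(t,\tau)\mapsto|U|^2e^{-(t-\tau)^2}$ is continuous and nonnegative, so this is routine.
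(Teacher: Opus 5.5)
Your proposal is correct and follows the paper's proof essentially step for step. You use the same Gaussian twist $V_\tau(s)=U(s)\exp((s-i\tau)^2/2)$, Gabriel's inequality with $p=2$, the identity $(1-\vartheta)\alpha^2+\vartheta\beta^2-\sigma^2=\vartheta(1-\vartheta)(\beta-\alpha)^2$, and H\"older's inequality after integrating over $\tau\in J_T$, and you obtain the one-sided version by working on $[\alpha+\delta,\beta-\delta]$ and letting $\delta\downarrow0$.
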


\begin{proof}
Fix $\tau\in J_T$ and apply \eqref{eq:gabriel-original}, with $p=2$, to
\begin{equation*}
V_\tau(s)=U(s)\exp((s-i\tau)^2/2).
\end{equation*}
The Gaussian factor supplies the required decay.  Since
\begin{equation*}
|V_\tau(\sigma+it)|^2
=e^{\sigma^2-(t-\tau)^2}|U(\sigma+it)|^2,
\end{equation*}
equation~\eqref{eq:gabriel-original} gives
\begin{align*}
\int_{\R}|U(\sigma+it)|^2e^{-(t-\tau)^2}\,dt
&\le e^{(1-\vartheta)\alpha^2+\vartheta\beta^2-\sigma^2}\\
&\quad\times
\left(\int_{\R}|U(\alpha+it)|^2e^{-(t-\tau)^2}\,dt\right)^{1-\vartheta}\\
&\quad\times
\left(\int_{\R}|U(\beta+it)|^2e^{-(t-\tau)^2}\,dt\right)^{\vartheta}.
\end{align*}
Because $\sigma=(1-\vartheta)\alpha+\vartheta\beta$, the exponent in
front is $\vartheta(1-\vartheta)(\beta-\alpha)^2$.
Integrating over $\tau\in J_T$ and applying H\"older's inequality proves
\eqref{eq:weighted-gabriel}.  The one-sided version follows by first using
$\alpha+\delta$ and $\beta-\delta$.  The interpolation parameters tend to
$\vartheta$, the exponential factors remain bounded by
$\exp((\beta-\alpha)^2/4)$, and the endpoint norms converge by hypothesis.
Letting $\delta\downarrow0$ gives the asserted limiting inequality.
\end{proof}

We now remove the shifted poles.  By Lemma~\ref{lem:regularised-zeta}, the
half-plane $\Re z>1/2$ admits an analytic logarithm of $\Phi_{K_i}(z)$.
On $\Re z>1$, let
\begin{equation*}
\ell(z)=\operatorname{Log}(z-1)-\operatorname{Log}(z+1),
\end{equation*}
where both logarithms are real on the positive real axis.  This is the
unique analytic logarithm of $(z-1)/(z+1)$ for which
$\ell(x)\to0$ as $x\to+\infty$.  We normalize the logarithm of
$\Phi_{K_i}$ by
\begin{equation*}
\log\Phi_{K_i}(z)=\log_{\mathrm{Euler}}\zeta_{K_i}(z)+\ell(z)
\qquad(\Re z>1),
\end{equation*}
and continue it to $\Re z>1/2$.  Define
\begin{equation}\label{eq:F-tilde-definition}
\tilde F_{\mathbf b}(s)=
\prod_{i=1}^r\Phi_{K_i}(s+ib_i)^{q_i}
\qquad(\Re s>1/2).
\end{equation}
This function is analytic even when the $q_i$ are not integers.  In
$\Re s>1$, the branches in \eqref{eq:F-dirichlet-series} and
\eqref{eq:F-tilde-definition} satisfy
\begin{equation}\label{eq:regularisation-identity}
\tilde F_{\mathbf b}(s)=F_{\mathbf b}(s)
\prod_{i=1}^r
\left(\frac{s+ib_i-1}{s+ib_i+1}\right)^{q_i}.
\end{equation}
With $N=T^{1/2}$, let
\begin{equation*}
R_N(s)=\tilde F_{\mathbf b}(s)-S_N(s).
\end{equation*}

We record the boundary meaning of \eqref{eq:F-tilde-definition}.  Suppose
$\rho$ is a zero of $\Phi_{K_i}$ on $\Re z=1/2$, of multiplicity $m$.
In a neighbourhood of $\rho$,
\begin{equation*}
\Phi_{K_i}(z)=(z-\rho)^m g(z),\qquad g(\rho)\ne0.
\end{equation*}
On the intersection of this neighbourhood with $\Re z>1/2$, the chosen
fractional power differs by a constant of modulus one from compatible
branches of $(z-\rho)^{m q_i}g(z)^{q_i}$.  It therefore tends to zero when
$z$ tends to $\rho$ from the right.  Away from the
boundary zeros, $\Phi_{K_i}$ has a local analytic logarithm, and the chosen
global logarithm differs from it by an element of $2\pi i\mathbb Z$.
It consequently has a finite one-sided limit.  Thus
$\tilde F_{\mathbf b}$, and hence $R_N$, has the boundary values used
below; at a boundary zero we use the convention $0^{q_i}=0$.

We also record a majorant for these boundary values.

\begin{lem}\label{lem:strip-growth}
There is a constant
$C\ge1$, depending only on the fixed fields and $\mathbf a$, such that,
uniformly for $0<\delta\le1/4$, $1/2+\delta\le\sigma\le3/2$,
$|b_i|\le T/2$, and $t\in\R$,
\begin{equation}\label{eq:boundary-majorant}
|\tilde F_{\mathbf b}(\sigma+it)|^2
+|S_N(\sigma+it)|^2
+|R_N(\sigma+it)|^2
\ll (1+T+|t|)^C.
\end{equation}
\end{lem}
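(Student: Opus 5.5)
Since $|R_N|^2\le 2|\tilde F_{\mathbf b}|^2+2|S_N|^2$, it suffices to bound $\tilde F_{\mathbf b}$ and $S_N$ separately on the strip $1/2+\delta\le\sigma\le3/2$. We treat the Dirichlet polynomial first and then the regularised zeta product, taking care that no constant depends on $\delta$.

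\emph{The polynomial $S_N$.} This part is trivial. Lemma~\ref{lem:coefficient-bounds} gives $|c_{\mathbf b}(n)|\ll_\epsilon n^{\epsilon}$ uniformly in $\mathbf b$. Since $\sigma\ge 1/2$,
\begin{equation*}
|S_N(\sigma+it)|\le\sum_{n\le N}|c_{\mathbf b}(n)|n^{-1/2}\ll N^{1/2+\epsilon}\ll T^{1/4+\epsilon}.
\end{equation*}
This is a fixed power of $T$.

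\emph{Reduction for $\tilde F_{\mathbf b}$.} Since $|\tilde F_{\mathbf b}(s)|=\prod_i|\Phi_{K_i}(s+ib_i)|^{q_i}$ and $|b_i|\le T/2$, it is enough to show that for each fixed $K=K_i$,
\begin{equation*}
|\Phi_K(z)|\ll(1+|\Im z|)^{C_0}\qquad\text{for } \tfrac12\le\Re z\le\tfrac32,
\end{equation*}
with $C_0$ depending only on $K$. Indeed, substituting $\Im z=t+b_i$ gives $1+|t+b_i|\le 1+T+|t|$, and raising to the powers $2q_i=a_i$ yields the majorant with $C=2C_0\sum_i q_i$ plus the exponent coming from $S_N$. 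Note that only an upper bound on $|\Phi_K|$ is needed. The branch of the fractional power affects only the argument, not the modulus, so the normalisation of $\log\Phi_{K_i}$ is irrelevant here.

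\emph{Bounding $\Phi_K$.} The factor $(z-1)/(z+1)$ has modulus at most $1$ in $\Re z>0$. The remaining task is to bound $\zeta_K(z)$ polynomially. This requires uniformity near the pole, not near the line $\Re z=1/2$.
\begin{itemize}
\item For $|\Im z|\ge1$, we use the standard convexity bound $\zeta_K(\sigma+it)\ll_K(1+|t|)^{[K:\Q]/2}$ on $1/2\le\sigma\le 3/2$. This follows from Phragm\'en--Lindel\"of applied to $(z-1)\zeta_K(z)$ between $\Re z=-\epsilon$ and $\Re z=1+\epsilon$, using the functional equation and Stirling's formula. No GRH is needed.
\item For $|\Im z|\le1$, the function $\Phi_K$ is analytic on a neighbourhood of the compact rectangle $[1/2,3/2]\times[-1,1]$. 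The pole at $z=1$ is cancelled, and $\zeta_K$ is entire elsewhere. So $\Phi_K$ is bounded there by a constant depending on $K$.
\end{itemize}
Because the bound is proved on the closed strip including $\Re z=1/2$, it holds uniformly in $\delta$. It also covers the one-sided boundary values at zeros, which the convention $0^{q_i}=0$ assigns the value $0$.

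The only real point is keeping the estimate uniform in the shifts $b_i$ and in $\delta$. The shifts cost only the replacement of $|t|$ by $|t|+T$. The uniformity in $\delta$ holds because the convexity bound is a statement about $\zeta_K$ itself and is insensitive to zeros.
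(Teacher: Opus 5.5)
Your proof is correct and follows the same route as the paper. Both use polynomial growth of $\zeta_K$ in the strip (functional equation, Stirling, Phragm\'en--Lindel\"of), removal of the pole at $1$, the branch-independent identity $|\Phi_{K_i}^{q_i}|=|\Phi_{K_i}|^{q_i}$ with $|t+b_i|\le|t|+T/2$, the coefficient bound for $S_N$, and the triangle inequality for $R_N$; your split into $|\Im z|\le1$ (compactness) and $|\Im z|\ge1$ (convexity with $|(z-1)/(z+1)|\le1$) just spells out what the paper states in one line.
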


\begin{proof}
The functional equation, Stirling's formula, and the
Phragm\'en--Lindel\"of principle give polynomial growth for each fixed
Dedekind zeta function in a fixed vertical strip.  The apparent pole of
$\Phi_{K_i}$ at $1$ is removable, so the same conclusion holds for
$\Phi_{K_i}$ throughout $1/2\le\Re z\le3/2$.  Since the exponents $q_i$
are positive and real,
$|\Phi_{K_i}(z)^{q_i}|=|\Phi_{K_i}(z)|^{q_i}$, independently of the
chosen logarithm.  The inequality $|t+b_i|\le |t|+T/2$ therefore gives
the required bound for $\tilde F_{\mathbf b}$.  Lemma
\ref{lem:coefficient-bounds} and $N=T^{1/2}$ give a fixed polynomial bound
for $S_N$, and the estimate for $R_N$ follows from the triangle
inequality.
\end{proof}

From \eqref{eq:gaussian-weight} we also have
\begin{equation}\label{eq:gaussian-majorant}
w_T(t)\le \frac{T}{2}
\exp\left(-\operatorname{dist}(t,J_T)^2\right).
\end{equation}
Consequently the right side of \eqref{eq:boundary-majorant}, multiplied by
$w_T(t)$, is integrable and is independent of $\delta$, $\sigma$, and
$\mathbf b$ in the stated ranges.

\begin{lem}\label{lem:right-error}
Let $N=T^{1/2}$ and let $R_N$ be as above.
Uniformly for $|b_i|\le T/2$,
\begin{align}
\mathcal M_{5/4}(R_N)&\ll T^{7/8},                    \label{eq:right-error}\\
\mathcal M_{3/2}(\tilde F_{\mathbf b})&\ll T.     \label{eq:right-F}
\end{align}
\end{lem}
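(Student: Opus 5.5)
On $\Re s=5/4$ and $\Re s=3/2$ everything lies in the half-plane of absolute convergence, so I would work directly with Dirichlet series there, using \eqref{eq:regularisation-identity}. Write $P_{\mathbf b}(s)=\prod_i\bigl((s+ib_i-1)/(s+ib_i+1)\bigr)^{q_i}$. For $\Re s\ge 5/4$ we have $|(z-1)/(z+1)|\le 1$ whenever $\Re z>0$. Hence $|P_{\mathbf b}(s)|\le1$, and also $P_{\mathbf b}(s)=1+O\bigl(\sum_i(1+|t+b_i|)^{-1}\bigr)$ (with the branch normalised via $\ell$), uniformly in $\mathbf b$.

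\eqref{eq:right-F} is the easy step. On $\Re s=3/2$, Lemma~\ref{lem:coefficient-bounds} gives $|F_{\mathbf b}(s)|\le\sum_n|c_{\mathbf b}(n)|n^{-3/2}\ll1$, so $|\tilde F_{\mathbf b}|\ll1$ there. Since $\int_\R w_T(t)\,dt=\sqrt\pi\,T/2$, it follows that $\mathcal M_{3/2}(\tilde F_{\mathbf b})\ll T$.

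For \eqref{eq:right-error}, on $\Re s=5/4$ I would split
\[
R_N=\bigl(F_{\mathbf b}-S_N\bigr)P_{\mathbf b}+S_N\bigl(P_{\mathbf b}-1\bigr).
\]
\begin{itemize}
\item The first term is bounded pointwise by $\sum_{n>N}|c_{\mathbf b}(n)|n^{-5/4}\ll_\epsilon N^{-1/4+\epsilon}=T^{-1/8+\epsilon/2}$. Its weighted mean square is therefore $\ll T\cdot T^{-1/4+\epsilon}\le T^{7/8}$.
\item For the second term, $|S_N(5/4+it)|\ll1$ uniformly, so it suffices to bound
\[
\int|P_{\mathbf b}-1|^2w_T\,dt\ll\sum_i\int\min\bigl(1,(1+|t+b_i|)^{-2}\bigr)w_T(t)\,dt.
\]
Since $w_T\le\sqrt\pi$ pointwise (a Gaussian integral), each integral is $O(1)$. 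Any polynomially small bound such as $T^{7/8}$ then follows trivially.
\end{itemize}

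The main point needing care is the branch bookkeeping: checking that \eqref{eq:regularisation-identity} holds with the stated normalisation $\ell(x)\to0$, so that $P_{\mathbf b}\to1$ as $|t+b_i|\to\infty$ rather than tending to some other root of unity. I would verify this from the fact that $\ell(z)=O(1/|z|)$ for $\Re z\ge 5/4+\Re(ib_i)$, i.e.\ for $\Re z\ge5/4$. This gives $|P_{\mathbf b}-1|\ll\sum_i q_i|\ell(s+ib_i)|\ll\sum_i(1+|t+b_i|)^{-1}$. Everything else is routine once uniformity in $\mathbf b$ is tracked.
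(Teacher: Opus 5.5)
Your argument is correct. For \eqref{eq:right-F}, and for the tail $F_{\mathbf b}-S_N$ on $\Re s=5/4$, it is the same as the paper's proof. The paper takes $\epsilon=1/16$ and obtains $T^{13/16}$, while your choice of $\epsilon$ gives $T^{3/4+\epsilon}$.

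The difference lies in how the regularisation factor is handled. The paper bounds $\tilde F_{\mathbf b}-F_{\mathbf b}$ pointwise by $O(T^{-1})$, but only for $T\le t\le 2T$. There $|t+b_i|\ge T/2$ forces $\ell(5/4+i(t+b_i))=O(T^{-1})$, and this is where the hypothesis $|b_i|\le T/2$ is used. Off $[T,2T]$, the paper uses only that $R_N$ is bounded on $\Re s=5/4$, together with the Gaussian tail $\int_{\R\setminus[T,2T]}w_T\ll e^{-T^2/20}$.

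You instead treat $S_N(P_{\mathbf b}-1)$ globally. Since $w_T\le\sqrt\pi$ and $\int_\R(1+|t+b_i|)^{-2}\,dt<\infty$, this piece has weighted mean square $O(1)$ wherever the shifts lie. Your route is a little cleaner. It needs neither the localisation of $w_T$ nor the restriction on the shifts, so \eqref{eq:right-error} holds uniformly for all real $\mathbf b$. The paper's route instead yields the stronger local statement $\tilde F_{\mathbf b}=F_{\mathbf b}+O(T^{-1})$ on $[T,2T]$, the same localisation device it uses again in the proof of Theorem~\ref{thm:main}.

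Your branch check is exactly what the paper's normalisation of $\log\Phi_{K_i}$ by $\ell$ provides. Indeed $P_{\mathbf b}(s)=\exp\bigl(\sum_i q_i\ell(s+ib_i)\bigr)$, and $|\ell(z)|\ll|z+1|^{-1}$ is bounded on $\Re z\ge 5/4$, so $|e^w-1|\ll|w|$ applies.
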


\begin{proof}
Lemma~\ref{lem:coefficient-bounds}, with $\epsilon=1/16$, gives uniformly
in $t$ and $\mathbf b$
\begin{equation}\label{eq:dirichlet-tail}
|F_{\mathbf b}(5/4+it)-S_N(5/4+it)|
\ll\sum_{n>N}n^{-5/4+1/16}
\ll N^{-3/16}=T^{-3/32}.
\end{equation}
If $T\le t\le2T$ and $|b_i|\le T/2$, then
$|t+b_i|\ge T/2$.  The normalization above gives
\begin{equation*}
\ell(5/4+i(t+b_i))
=\operatorname{Log}\left(1-\frac{2}{9/4+i(t+b_i)}\right)
=O(T^{-1}).
\end{equation*}
Equation~\eqref{eq:regularisation-identity} and absolute convergence on
$\Re s=5/4$ therefore show that
\begin{equation}\label{eq:regularisation-error}
|\tilde F_{\mathbf b}(5/4+it)-F_{\mathbf b}(5/4+it)|
\ll T^{-1}.
\end{equation}
On the whole line $\Re s=5/4$, absolute convergence and Lemma
\ref{lem:coefficient-bounds} show that $F_{\mathbf b}$ and $S_N$ are
uniformly bounded.  Moreover,
\begin{equation*}
\left|\frac{s+ib_i-1}{s+ib_i+1}\right|\le1
\qquad(\Re s=5/4),
\end{equation*}
so \eqref{eq:regularisation-identity} gives the same bound for
$\tilde F_{\mathbf b}$, and hence for $R_N$.  For $t\notin[T,2T]$ and
$\tau\in J_T$, one has $|t-\tau|\ge T/4$.  Integrating first in $t$ and
then in $\tau$ gives
\begin{equation*}
\int_{\R\setminus[T,2T]}w_T(t)\,dt\ll e^{-T^2/20}.
\end{equation*}
Since
$\int_{\R}w_T(t)\,dt=\sqrt\pi T/2$, equations
\eqref{eq:dirichlet-tail} and \eqref{eq:regularisation-error} yield
\begin{equation*}
\mathcal M_{5/4}(R_N)
\ll T\bigl(T^{-3/32}+T^{-1}\bigr)^2+O(e^{-T^2/20})
\ll T^{13/16}
\ll T^{7/8}.
\end{equation*}
On $\Re s=3/2$, absolute convergence bounds the zeta factors, while the
rational factors in \eqref{eq:regularisation-identity} have modulus at
most one.  Integration of the weight gives \eqref{eq:right-F}.
\end{proof}

\begin{prop}\label{prop:weighted-lower}
For all sufficiently large $T$, uniformly for $|b_i|\le T/2$,
\begin{equation}\label{eq:weighted-lower}
\mathcal M_{1/2}(\tilde F_{\mathbf b})
\gg T B(T,\mathbf b,\mathbf a).
\end{equation}
Here the norm at $1/2$ is the one-sided limit from $\Re s>1/2$.
\end{prop}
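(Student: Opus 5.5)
We follow the two-step scheme announced in the overview, applying Lemma~\ref{lem:weighted-gabriel} twice with a fixed large $\eta$ and $\gamma=\tfrac12+\eta/\log T$.

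\textbf{Step 1: the error $R_N$ is small on the line $\Re s=\gamma$.} We interpolate $\mathcal M_\sigma(R_N)$ between $\alpha=1/2$ (one-sided limit) and $\beta=5/4$.
\begin{itemize}
\item The one-sided version of Lemma~\ref{lem:weighted-gabriel} applies. Lemma~\ref{lem:strip-growth} and \eqref{eq:gaussian-majorant} give an integrable dominating function independent of $\delta$, and the boundary values of $R_N$ exist as described before Lemma~\ref{lem:strip-growth}. Dominated convergence then gives $\mathcal M_{1/2+\delta}(R_N)\to\mathcal M_{1/2}(R_N)$.
\item At $\alpha=1/2$: we need an upper bound of the form $\mathcal M_{1/2}(R_N)\le T^{C_0}$ for some fixed $C_0$. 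This follows from \eqref{eq:boundary-majorant}, since the weight is concentrated near $[T,2T]$.
\item At $\beta=5/4$: Lemma~\ref{lem:right-error} gives $\mathcal M_{5/4}(R_N)\ll T^{7/8}$.
\item Here $\vartheta=(\eta/\log T)/(3/4)$, so
\[
\mathcal M_\gamma(R_N)\ll T^{C_0(1-\vartheta)}T^{7\vartheta/8}\ll T^{C_0}e^{-(4/3)(C_0-7/8)\eta}.
\]
\end{itemize}

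\textbf{Step 1 does not close as stated.} A polynomial bound at $\tfrac12$ is too weak: it yields $T^{C_0}$ rather than $T$. So we first need the sharper bound
\[
\mathcal M_{1/2}(R_N)\ll T(\log T)^{A}.
\]
This is the main obstacle. I would obtain it from the upper bound of \cite[Theorem~1.1]{DurkanKarakMahatab}, which gives $\mathcal M_{1/2}(\tilde F_{\mathbf b})\ll TB$. That result is under the same hypotheses and is valid for $\Phi$ in place of $\zeta$, because $|(z-1)/(z+1)|\le1$ on $\Re z\ge1/2$. Together with \eqref{eq:S-half-upper} this gives $\mathcal M_{1/2}(R_N)\ll TB$.

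With this input, interpolation gives
\[
\mathcal M_\gamma(R_N)\ll (TB)^{1-\vartheta}T^{7\vartheta/8}\ll TB\,e^{-c\eta}
\]
for some fixed $c>0$. Here we use $T^{-\vartheta/8}=e^{-\eta/6}$, and absorb the power $B^{-\vartheta}$, which is bounded by \eqref{eq:B-poly}.

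\textbf{Step 2: lower bound on the line $\Re s=\gamma$.} By the triangle inequality in $L^2(w_T)$ and \eqref{eq:S-gamma-lower},
\[
\mathcal M_\gamma(\tilde F_{\mathbf b})^{1/2}\ge \mathcal M_\gamma(S_N)^{1/2}-\mathcal M_\gamma(R_N)^{1/2}\gg \bigl(\eta^{-A_2/2}-Ce^{-c\eta/2}\bigr)(TB)^{1/2}.
\]
Fixing $\eta$ large makes the bracket positive, so $\mathcal M_\gamma(\tilde F_{\mathbf b})\gg_\eta TB$.

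\textbf{Step 3: transfer to the critical line.} We apply Lemma~\ref{lem:weighted-gabriel} to $\tilde F_{\mathbf b}$ on $[1/2,3/2]$ at $\sigma=\gamma$, now with $\vartheta=\eta/\log T$:
\[
TB\ll \mathcal M_{1/2}(\tilde F_{\mathbf b})^{1-\vartheta}\,\mathcal M_{3/2}(\tilde F_{\mathbf b})^{\vartheta}.
\]
\begin{itemize}
\item By \eqref{eq:right-F}, $\mathcal M_{3/2}(\tilde F_{\mathbf b})^{\vartheta}\ll T^{\eta/\log T}=e^{\eta}$, which is $O(1)$ for fixed $\eta$.
\item Hence $\mathcal M_{1/2}(\tilde F_{\mathbf b})\gg (TB)^{1/(1-\vartheta)}\ge TB$ for large $T$, using $TB\ge1$, which follows from \eqref{eq:B-poly}.
\item The Gaussian factor $\exp(\vartheta(1-\vartheta))$ in \eqref{eq:weighted-gabriel} is bounded.
\end{itemize}

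If the upper-bound input from \cite{DurkanKarakMahatab} is not admissible, a fallback for Step 1 is to interpolate instead between $\tfrac12+\eta_1/\log T$ and $5/4$, on a line where a Rankin-type mean value is available. That route is less direct.
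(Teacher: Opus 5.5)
Your proof is correct, but at the one delicate point it takes a different and heavier route than the paper. You rightly see that interpolating $R_N$ between $\tfrac12$ and $\tfrac54$ needs $\mathcal M_{1/2}(R_N)\ll TB$ at the true order. Strictly, a bound $T(\log T)^A$ would not suffice uniformly in the shifts, since $B$ can be a smaller power of $\log T$ for separated shifts while $\eta$ must stay fixed.

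You obtain this bound by importing the upper bound of \cite[Theorem~1.1]{DurkanKarakMahatab} and transferring it to $\tilde F_{\mathbf b}$ via $|(z-1)/(z+1)|\le1$, $w_T\le\sqrt\pi$ and the Gaussian tail. That is legitimate.

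The paper instead argues by dichotomy. If $\mathcal M_{1/2}(\tilde F_{\mathbf b})\ge TB$, there is nothing to prove. Otherwise $\mathcal M_{1/2}(\tilde F_{\mathbf b})\le TB$, and this assumption, together with \eqref{eq:S-half-upper} and the $L^2$ triangle inequality, gives exactly $\mathcal M_{1/2}(R_N)\ll TB$. From there your Steps 1--3 coincide with the paper's: the same $\vartheta_1=4\eta/(3\log T)$, the saving $e^{-\eta/6}$, the choice of a fixed large $\eta$, and the second interpolation on $[\tfrac12,\tfrac32]$.

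The case split makes the lower bound self-contained. It rests only on GRH, Gabriel convexity and the short-polynomial computation, not on the much deeper upper-bound theorem of the companion paper. So the two halves of \eqref{eq:two-sided-order} are logically independent. Your route gives a case-free argument, at the cost of that external dependency.

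Your fallback would not remove the dependency. An upper bound for $\mathcal M_{1/2+\eta_1/\log T}(R_N)$ requires controlling the mean square of $\tilde F_{\mathbf b}$ itself just right of the critical line. That function is not a short Dirichlet polynomial, so no Rankin-type mean value applies, and the task is essentially the moment upper bound again. The dichotomy is the idea that makes any such input unnecessary.
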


\begin{proof}
Fix $\eta\ge\eta_0$, to be chosen below, and let $\gamma$ be as in
\eqref{eq:N-gamma}.  Put $B=B(T,\mathbf b,\mathbf a)$.  The preceding
estimates furnish constants $c_0,C_0,C_1>0$, depending only on the fixed
fields and $\mathbf a$, such that
\begin{align*}
\mathcal M_\gamma(S_N)&\ge c_0\eta^{-A_2}TB,
&\mathcal M_{1/2}(S_N)&\le C_0TB,\\
\mathcal M_{5/4}(R_N)&\le C_1T^{7/8},
&\mathcal M_{3/2}(\tilde F_{\mathbf b})&\le C_1T.
\end{align*}
If $\mathcal M_{1/2}(\tilde F_{\mathbf b})\ge TB$, the proposition already
holds.  We may therefore assume
\begin{equation}\label{eq:small-boundary-alternative}
\mathcal M_{1/2}(\tilde F_{\mathbf b})\le TB.
\end{equation}
Equations \eqref{eq:S-half-upper} and
\eqref{eq:small-boundary-alternative}, together with the $L^2$ triangle
inequality, give
\begin{equation}\label{eq:R-half-upper}
\mathcal M_{1/2}(R_N)\ll TB.
\end{equation}

Apply Lemma~\ref{lem:weighted-gabriel} to $R_N$ between $1/2$ and $5/4$.
The interpolation parameter at $\gamma$ is
\begin{equation*}
\vartheta_1=\frac{4\eta}{3\log T}.
\end{equation*}
By \eqref{eq:right-error}, \eqref{eq:R-half-upper}, and
\eqref{eq:B-poly},
\begin{align}
\mathcal M_\gamma(R_N)
&\ll (TB)^{1-\vartheta_1}(T^{7/8})^{\vartheta_1}\notag\\
&\ll TB\exp\left(-\frac{\eta}{6}
+O\left(\frac{\eta\log\log T}{\log T}\right)\right).  \label{eq:R-gamma-upper}
\end{align}
Thus there is a fixed $C_2>0$ such that, for every fixed
$\eta\ge\eta_0$ and all sufficiently large $T$,
\begin{equation*}
\mathcal M_\gamma(R_N)\le2C_2e^{-\eta/6}TB.
\end{equation*}
Choose $\eta$ so large that
\begin{equation}\label{eq:eta-choice}
2C_2e^{-\eta/6}\le \frac{c_0}{4}\eta^{-A_2}.
\end{equation}
It follows that
$\mathcal M_\gamma(R_N)\le\mathcal M_\gamma(S_N)/4$.  Hence the triangle
inequality in weighted $L^2$ gives
\begin{align*}
\mathcal M_\gamma(\tilde F_{\mathbf b})^{1/2}
&\ge \mathcal M_\gamma(S_N)^{1/2}
-\mathcal M_\gamma(R_N)^{1/2}\\
&\ge\tfrac12\mathcal M_\gamma(S_N)^{1/2},
\end{align*}
and therefore
\begin{equation}\label{eq:F-gamma-lower}
\mathcal M_\gamma(\tilde F_{\mathbf b})
\ge \frac{c_0}{4}\eta^{-A_2}TB\gg_\eta TB.
\end{equation}

Apply Lemma~\ref{lem:weighted-gabriel} once more, now to
$\tilde F_{\mathbf b}$ between $1/2$ and $3/2$.  The interpolation
parameter is $\vartheta_2=\eta/\log T$.  Equations
\eqref{eq:F-gamma-lower} and \eqref{eq:right-F} give
\begin{equation*}
TB\ll_\eta
\mathcal M_{1/2}(\tilde F_{\mathbf b})^{1-\vartheta_2}
T^{\vartheta_2}.
\end{equation*}
Write
$X=\mathcal M_{1/2}(\tilde F_{\mathbf b})/(TB)$.  By
\eqref{eq:B-poly}, uniformly in the shifts,
\begin{equation*}
B^{-\vartheta_2}
=\exp\!\left(O\left(\frac{\eta\log\log T}{\log T}\right)\right)
\le2
\end{equation*}
for sufficiently large $T$.  After division by $TB$, the preceding
estimate gives
\begin{equation*}
c_\eta\le X^{1-\vartheta_2}B^{-\vartheta_2}
\le2X^{1-\vartheta_2}
\end{equation*}
for a fixed $c_\eta>0$.  Since $\vartheta_2\le1/2$ once $T$ is large,
this implies $X\ge\min(1,(c_\eta/2)^2)$.  This proves
\eqref{eq:weighted-lower}.

To justify the boundary passages, apply Lemma~\ref{lem:weighted-gabriel}
first on the strips $[1/2+\delta,5/4]$ and
$[1/2+\delta,3/2]$, keeping the right endpoints fixed.  Equations
\eqref{eq:boundary-majorant} and
\eqref{eq:gaussian-majorant} give a polynomial majorant times a Gaussian,
uniformly for $|b_i|\le T/2$ and $0<\delta\le1/4$.  Dominated convergence
at each fixed $T$ and $\mathbf b$ therefore permits $\delta\downarrow0$
in every boundary norm used above; the dominating function and Gabriel
constants are uniform in $\mathbf b$.  At a zero on the critical line the
modulus of the corresponding fractional power tends to zero.  No analytic
continuation of that fractional power across the line is required.
\end{proof}

\section{Proof of the main theorem}\label{sec:main-proof}

\begin{proof}[Proof of Theorem~\ref{thm:main}]
For $T\le t\le2T$ and $|b_i|\le T/2$, one has $|t+b_i|\ge T/2$ and hence
\begin{equation*}
\left|\frac{1/2+i(t+b_i)-1}{1/2+i(t+b_i)+1}\right|\asymp1.
\end{equation*}
It follows from \eqref{eq:Phi-definition} that
\begin{equation}\label{eq:regularised-integrand-comparison}
|\tilde F_{\mathbf b}(1/2+it)|^2
\asymp
\prod_{i=1}^r
\left|\zeta_{K_i}\left(\tfrac12+i(t+b_i)\right)\right|^{a_i}
\end{equation}
throughout this interval.  Also, $w_T(t)\le\sqrt\pi$.  Taking the one-sided
limit in \eqref{eq:boundary-majorant} gives the same polynomial bound for
$|\tilde F_{\mathbf b}(1/2+it)|^2$.  If $t\notin[T,2T]$, put
$d=\operatorname{dist}(t,J_T)$; then $d\ge T/4$.  After increasing $C$ by
one to absorb the factor $T/2$ in
\eqref{eq:gaussian-majorant},
\begin{equation*}
(1+T+|t|)^C e^{-d^2}\ll_C e^{-4d^2/5}
\end{equation*}
for sufficiently large $T$.  Integration therefore gives
\begin{equation}\label{eq:weight-tail}
\int_{\R\setminus[T,2T]}
|\tilde F_{\mathbf b}(1/2+it)|^2w_T(t)\,dt
\ll e^{-T^2/20},
\end{equation}
uniformly for $|b_i|\le T/2$.  Proposition~\ref{prop:weighted-lower},
\eqref{eq:regularised-integrand-comparison}, and \eqref{eq:weight-tail}
therefore give
\begin{equation*}
TB(T,\mathbf b,\mathbf a)
\ll
\int_T^{2T}\prod_{i=1}^r
\left|\zeta_{K_i}\left(\tfrac12+i(t+b_i)\right)\right|^{a_i}\,dt
+O(e^{-T^2/20}).
\end{equation*}
The lower estimate in \eqref{eq:B-poly} absorbs the last term.  This proves
Theorem~\ref{thm:main}.
\end{proof}

\section{The unshifted consequences}\label{sec:corollaries}

\begin{proof}[Proof of Corollary \ref{cor:unshifted-mixed}]
Every Dedekind zeta function in \eqref{eq:double-coset-zeta} has a simple
pole at $1$ with positive residue.  Thus
\begin{equation*}
\calZ_{ij}\left(1+\frac1{\log T}\right)
\asymp (\log T)^{|H_i\backslash G/H_j|}.
\end{equation*}
The diagonal action of $G$ on $G/H_i\times G/H_j$ has
$|H_i\backslash G/H_j|$ orbits and character $\chi_i\chi_j$.
Burnside's lemma therefore gives
\begin{equation}\label{eq:burnside-double-coset}
|H_i\backslash G/H_j|
=\langle\chi_i,\chi_j\rangle_G.
\end{equation}
Set every shift equal to zero in Theorem~\ref{thm:main}, and use
\eqref{eq:burnside-double-coset}.  This proves the corollary.
\end{proof}

\begin{proof}[Proof of Corollary \ref{cor:single-field}]
Take $r=1$ and $a_1=2k$ in Corollary
\ref{cor:unshifted-mixed}.  Then
$\langle\chi,\chi\rangle_G=|H\backslash G/H|=\rho_K$, which proves
\eqref{eq:single-field}.  For a non-Galois cubic with Galois closure group
$S_3$, the subgroup $H$ is a point stabiliser.  Its action on the three
points has two orbits: the fixed point and the other two points.  Hence
$|H\backslash S_3/H|=2$.  For representatives of the two double cosets,
the corresponding subgroups $H\cap gHg^{-1}$ are conjugate to $H$ and
$\{1\}$, respectively.  Thus the double-coset definition also gives
\begin{equation*}
\calZ_{11}(s)=\zeta_K(s)\zeta_L(s).
\end{equation*}
\end{proof}

\begin{proof}[Proof of Corollary \ref{cor:galois}]
If $K/\Q$ is Galois, then $H$ is normal in $G$.  The double cosets are the
cosets of $H$, so
\begin{equation*}
\rho_K=|G/H|=[K:\Q].
\end{equation*}
Corollary \ref{cor:single-field} now gives \eqref{eq:galois}.
\end{proof}

\section*{Acknowledgements}
BD is supported by the Heilbronn Institute for Mathematical Research.
NK is supported by the Prime Minister's Research Fellowship (PMRF),
Government of India (PMRF ID:~2403449).  KM is supported by the
ARG-MATRICS programme (Grant No.~ANRF/ARGM/2025/002540/MTR).

\enlargethispage{4\baselineskip}

\end{document}